
\documentclass[letterpaper, 10 pt, conference]{ieeeconf}

\IEEEoverridecommandlockouts                             
\overrideIEEEmargins

\usepackage{amsmath,color}
\usepackage{amssymb}  
\usepackage{amsfonts}
\usepackage{mathtools}
\usepackage{graphicx}
\usepackage{dsfont}
\usepackage{psfrag}
\usepackage{subfigure,epstopdf}
\usepackage{cite}
\usepackage{bbm}

\usepackage{algorithmicx}
\usepackage{algorithm}
\usepackage{algpseudocode}
\usepackage{float}
\usepackage{hyperref}
\usepackage{cleveref}
\usepackage{wrapfig}

\usepackage{bm}

\usepackage{slashed}
\usepackage{tikz,gincltex}
\usepackage{arydshln}
\usepackage{nccmath}
\usepackage{caption}
\usepackage{MnSymbol}

\usepackage[font=small,labelfont=bf]{caption}

\usepackage{pgfplots,filecontents}
\pgfplotsset{compat=newest}
\usetikzlibrary{positioning,chains,fit,shapes,calc}
\usetikzlibrary{backgrounds}
\usetikzlibrary{shapes,snakes}
\usetikzlibrary{shapes.geometric, arrows.meta}

\tikzstyle{arrow} = [thick,->,>=stealth]

\usepgfplotslibrary{statistics}

\newtheorem{assumption}{Assumption}
\newtheorem{theorem}{Theorem}
\newtheorem{corollary}{Corollary}

\newtheorem{prop}{Proposition}
\newtheorem{lemma}{Lemma}

\makeatletter
\renewcommand*{\@begintheorem}[2]{\trivlist
      \item[\hskip \labelsep{\bfseries #1\ #2}]\itshape}
\renewcommand*{\@opargbegintheorem}[3]{\trivlist
      \item[\hskip \labelsep{\bfseries #1\ #2}] {(#3)}\ \itshape}
\makeatother

\definecolor{c1}{rgb}{0.368417, 0.506779,0.709798}
\definecolor{c2}{rgb}{0.880722, 0.611041,0.142051}
\definecolor{c3}{rgb}{0.560181, 0.691569,0.194885}
\definecolor{c4}{rgb}{0.922526, 0.385626,0.209179}
\definecolor{c5}{rgb}{0.528488, 0.470624,0.701351}
\definecolor{c6}{rgb}{0.772079, 0.431554,0.102387}
\definecolor{c7}{rgb}{0.363898, 0.618501,0.782349}
\definecolor{ao(english)}{rgb}{0.0, 0.5, 0.0}
\definecolor{cadmiumred}{rgb}{0.89, 0.0, 0.13}

\definecolor{blue}{rgb}{0, 0,0}

\newcommand{\MWDAP}{\textsf{\emph{MWDAP}}}
\newcommand{\MCDAP}{\textsf{\emph{MCDAP}}}
\newcommand{\MWMCDAP}{\textsf{\emph{MWMCDAP}}}

\newcommand{\outneigh}[1]{\mathcal N^{\text{out}}_{#1}}
\newcommand{\inneigh}[1]{\mathcal N^{\text{in}}_{#1}}
\newcommand{\representative}[1]{#1^\star}
\newcommand{\send}[3]{#1 \gg_{\text{{\tiny $#3$}}} #2}
\newcommand{\sendmsg}[4]{#1 \gg_{\text{{\tiny $#3$}}} #2\,:\,#4}
\newcommand{\receive}[3]{#1 \ll_{\text{{\tiny $#3$}}} #2}
\newcommand{\receivemsg}[4]{#1 \ll_{\text{{\tiny $#3$}}} #2\,:\,#4}

\newcommand{\sendlong}[3]{#1 \ggg_{\text{{\tiny $#3$}}} #2}

\newcommand{\sendlongmsg}[4]{#1 \ggg_{\text{{\tiny $#3$}}} #2\,:\,#4}

\newcommand{\receivelongmsg}[4]{#1 \lll_{\text{{\tiny $#3$}}} #2\,:\,#4}

\let\comment\relax
\newcommand{\comment}[1]{{\tiny{\color{gray}  			 \hfill$\diamond$\, {#1}}}}
\newcommand{\phase}[1]{{\scriptsize{\color{gray}  			 \hfill *\quad\textsc{#1}\quad* \hfill}}}
\newcommand{\commentContinue}[1]{{\tiny{\color{gray}  			 \hfill\, {#1}}}}

\setlength\parindent{0pt}
\setlength{\textfloatsep}{10pt plus 1.0pt minus 2.0pt}
\setlength{\abovecaptionskip}{1ex}
 \setlength{\belowcaptionskip}{1ex}
 \setlength{\floatsep}{1ex}
 \setlength{\textfloatsep}{1ex}

\tikzstyle{startstop} = [rectangle, rounded corners, minimum width=3cm, minimum height=1cm, text centered, draw=black, fill=c1]
\tikzstyle{process} = [rectangle, minimum width=3cm, minimum height=1cm, text centered, draw=c2, fill=c2!40]
\tikzstyle{io} = [rectangle, rounded corners, minimum width=3cm, minimum height=1cm, text centered, draw=c3, fill=c3!40]
\tikzstyle{io2} = [rectangle, rounded corners, minimum width=3cm, minimum height=1cm, text centered, draw=c3, fill=c3!40]
\tikzstyle{io3} = [diamond, rounded corners, minimum width=3cm, minimum height=1cm, text centered, draw=c4, fill=c4!40]
\tikzstyle{io4} = [rectangle, minimum width=3cm, minimum height=1cm, text centered, draw=c1, fill=c1!40]

\def\firstcircle{(90:.8cm) circle (1.1cm)}
\def\secondcircle{(210:.8cm) circle (1.1cm)}
\def\thirdcircle{(330:.8cm) circle (1.1cm)}

\newfloat{alg}{htbp}{loa}
\floatname{alg}{Algorithm}


\title{\LARGE \bf  Distributed Graph Augmentation Protocols to Achieve Strong Connectivity in Multi-Agent Networks} 

\author{Guilherme Ramos$^\ast$, Diogo Poças$^\ast$, and Sérgio Pequito
    \thanks{$^\ast$The first two authors contributed equally to this work.} 
	\thanks{G. Ramos ({\tt\small guilherme.ramos@tecnico.ulisboa.pt}) is with Dept. of Computer Science and Engineering, Instituto Superior Técnico, University of Lisbon, Portugal and Instituto de Telecomunicações, 1049-001 Lisbon, Portugal.}
	\thanks{D. Poças ({\tt\small diogo.pocas@tecnico.ulisboa.pt}) is with Dept. of Mathematics, Instituto Superior Técnico, University of Lisbon, Portugal and Instituto de Telecomunicações, 1049-001 Lisbon, Portugal. }
	\thanks{S. Pequito ({\tt\small sergio.pequito@tecnico.ulisboa.pt}) is with Dept. of Electrical and Computer Engineering, Instituto Superior Técnico, University of Lisbon, Portugal and Institute for Systems and Robotics (ISR/IST), LARSyS, Lisbon, Portugal.}
 }

\date{} 

\begin{document}
\maketitle

\thispagestyle{empty}
\pagestyle{empty}

\begin{abstract}                          

In multi-agent systems, strong connectivity of the communication network is often crucial for establishing consensus protocols, which underpin numerous applications in decision-making and distributed optimization. 
However, this connectivity requirement may not be inherently satisfied in geographically distributed settings. 
Consequently, we need to find the minimum number of communication links to add to make the communication network strongly connected. 
To date, such problems have been solvable only through centralized methods. 
{\color{blue}This paper introduces a fully distributed algorithm that efficiently identifies an optimal set of edge additions to achieve strong connectivity, using only local information. 
The majority of the communication between agents is local (according to the digraph structure), with only a few steps requiring communication among non-neighboring agents to establish the necessary additional communication links.}   
A comprehensive empirical analysis of the algorithm's performance on various random communication networks reveals its efficiency and scalability. 
\end{abstract}

\begin{keywords}
graph augmentation; strongly connected digraph; distributed algorithms; multi-agent systems
\end{keywords}

\section{Introduction}\label{sec:intro}

Network strong connectivity plays a pivotal role in \mbox{multi-agent} networks, encompassing critical applications in communication and coordination among nodes, particularly in applications such as distributed consensus~\cite{garin2010survey,ramossilvestresIJoC, ramos2021distributed, amirkhani2022consensus, ramos2023designing} and distributed optimization~\cite{rabbat2004distributed,huang2015differentially,molzahn2017survey,nedic2018distributed,yang2019survey}. 
In distributed computing and systems where nodes interact and share resources, strong connectivity ensures that tasks can be distributed effectively among nodes~\cite{chlebus2002deterministic,li2006construction,bodlaender2013connectivity}.    

However, in real-world multi-agent settings, this connectivity requirement may not be inherently satisfied. Consequently, solving the digraph augmentation problem -- finding the minimum number of communication links (edges) to add to make the network strongly connected -- becomes essential. To date, such problems have been solvable only through centralized methods. The digraph augmentation problem with varying costs for adding edges is, in general, intractable, i.e., NP-complete~\cite{eswaran1976augmentation,doi:10.1137/0210019,crescenzi1995compendium}, even in the case of a symmetric weighted digraph~\cite{chen1989strongly}. Nevertheless, the works in~\cite{eswaran1976augmentation} and~\cite{rosenthal1977smallest} efficiently solved the digraph augmentation problem when the cost of adding an edge is uniform, i.e., when the objective is to attain an augmentation with the minimal number of edges. 

Notably, existing solutions like~\cite{doi:10.1137/0205044} rely on centralized operations, making them ill-suited for distributed implementation and highlighting the need for novel approaches to operate effectively within the constraints of local information exchange in multi-agent networks. 
It is also worth noticing that parallel computing methods, which utilize multiple processors within a single machine, have been applied to the digraph augmentation problems~\cite{chaudhuri19870,chaudhuri1988fast,aggarwal1989parallel,itokawa2007parallel}. However, these approaches do not readily translate to distributed settings, where tasks are divided across different agents in a multi-agent network, and where each agent gathers only local information, i.e., from its neighbors. 
Hence, distributed approaches pose unique challenges for graph augmentation problems. 

Graph augmentation problems can be inherently viewed as the network design counterpart to related network analysis problems, such as determining strongly connected components. While the design aspect has seen limited progress in distributed settings, some advancements have been made on the analysis front. The work in~\cite{reed2022scalable} introduced an efficient distributed method to compute the strongly connected components of a digraph extended in~\cite{ramos2024distributed} to a distributed approach to classify strongly connected components based on their connectivity within a directed acyclic graph. 

{\color{blue}
The work in~\cite{atman2021distributed} presents distributed algorithms to verify and ensure strong connectivity in directed networks, relying on a maximum consensus algorithm. This method does not require adding the minimum number of edges to ensure strongly connectivity. 
In~\cite{atman2022finite}, the authors extend the previous work to consider digraph augmentation for weakly connected digraphs and disconnected digraphs.  
The algorithm presented by the authors first finds a non-optimal set of edges to make the digraph strongly connected and then (s)elects a virtual leader that aggregates the necessary information to compute (in a centralized manner) a minimal augmenting set of the original graph and broadcasts this information. In contrast, our algorithm is fully distributed, with only a few steps requiring
communication among non-neighboring agents to establish the
necessary additional communication links. Therefore, it builds the optimal solution incrementally without adding spurious edges during its computation.
}

In this paper, our main contribution is the first distributed method for addressing the graph augmentation problem, focusing on determining the minimum number of edges required to render a given weakly-connected graph into a strongly connected. 
%
%
{\color{blue}In particular, the majority of the communication between agents is local (i.e., according to the digraph structure), with only a few steps requiring communication among \mbox{non-neighboring} agents to establish the necessary additional communication links.}  
{\color{blue} Further, we provide a comprehensive empirical analysis
of the algorithm’s performance on random communication
networks (digraph) models, revealing its efficiency and scalability
across different network structures and sizes.

Envisioned applications of our methodology include, but are not limited to, multi-agent systems composed of vehicles interconnected by a communication network are a common approach for surveillance, exploration, and measurement tasks to be accomplished by unmanned and autonomous robotic systems. Missions involving a large number of such agents can adopt a leader/follower approach characterized by having (i) agents that may need to do long-range communication (leaders), and (ii) agents that only require to perform short-range communication (followers)~\cite{lozano2008controllability,ge2018survey,kaminer2007coordinated}.
}

\section{Problem statement}\label{sec:prop_stat}

Consider a multi-agent network, whose communication digraph is given by a \textit{directed graph}, \textit{digraph}, defined as an ordered pair $\mathcal{G} = (\mathcal{V}, \mathcal{E})$, where $\mathcal{V}$ is a nonempty set of nodes, and $\mathcal{E} \subseteq \mathcal{V} \times \mathcal{V}$ is a set of edges. Each edge is an ordered pair representing the accessibility relationship between nodes (agents). In other words, if $u, v \in \mathcal{V}$ and $(u, v) \in \mathcal{E}$, then node $v$ directly accesses information from node $u$.

The (short-range) communication network $\mathcal{G} = (\mathcal{V}, \mathcal{E})$ is \textit{weakly connected}, i.e., for any pair of distinct nodes $u,v\in\mathcal V$ there is a sequence of distinct nodes $(u=v_1, v_2, \ldots, v_k=v)$, such that (s.t.) $\{(v_i, v_{i+1}),(v_{i+1}, v_{i})\} \cap \mathcal{E}\neq\emptyset$ for all $i = 1, \ldots, k-1$ (i.e., if we ignore the directions of the edges, there is a path from $u$ to $v$). 
A \textit{path} in $\mathcal{G} = (\mathcal{V}, \mathcal{E})$ is defined as a sequence of distinct nodes $(v_1, v_2, \ldots, v_k)$, where $(v_i, v_{i+1}) \in \mathcal{E}$ for all $i = 1, \ldots, k-1$. If there is a path from node $u$ to node $v$, then we express this by $u\rightrsquigarrow v$. If such path does not exist, we express this by $u\nrightrsquigarrow v$.

Nonetheless, the multi-agent network seeks to perform consensus-based operations and, possibly, solve distributed optimization problems. Hence, additional (long-range) communication capabilities are required to use to ensure that the communication network becomes \emph{strongly connected}, i.e., for each pair of distinct nodes $u, v\in \mathcal V$ both $u\rightrsquigarrow v$ \underline{and} $v\rightrsquigarrow u$.

{\color{blue}The problem we study in this work is the fully distributed \textit{minimum cardinality digraph augmentation problem} (MCDAP), which can be posed as follows.    
}

\noindent \MCDAP\quad {\em Given a weakly connected digraph $\mathcal G=(\mathcal V,\mathcal E)$, find a set of (long-range, end-to-end -- E2E) communications captured by a set of edges $\mathcal{E}^\ast $, with}
\begin{equation*}\label{eq:prob_stat}
\resizebox{\linewidth}{!}{%
$
\mathcal E^\ast = 
\displaystyle\mathop{\arg\min}_{\mathcal E'\subset \mathcal V\times\mathcal V}\quad|\mathcal E'|
\text{ s.t. } \mathcal G'=(\mathcal V,\mathcal E\cup\mathcal E')\text{ \em is strongly connected.}
$
}
\end{equation*}

\vspace{.5mm}
That is, the overall communication network formed by \mbox{short- and} long-range communications is strongly connected. 

Notice that the problem \MCDAP~can be solved in $\mathcal O(|\mathcal V|+|\mathcal E|)$ (i.e., in polynomial time)~\cite{doi:10.1137/0205044}. Unfortunately, in the context of geographically distributed multi-agent networks, centralized approaches are not feasible. 
Moreover, since \mbox{long-range} communication is extremely costly, the goal is to avoid it and not rely on a centralized solution where every agent communicates directly with every other agent. 
Thus, in this work, we propose a distributed method to solve the~\MCDAP. 



To achieve this goal, we need the following assumptions.

\begin{assumption}\label{rem:1}
    Each agent is assigned a unique identifier (ID). For simplicity, we represent these IDs as a sequence of integers, which do not need to be consecutive.~\hfill$\nabla$
\end{assumption}

{\color{blue}
\begin{assumption}\label{rem:2}
    Each agent is aware of their communication partners IDs (in-neighbors and out-neighbors).~\hfill$\nabla$
\end{assumption}

\begin{assumption}\label{rem:3}
    Each agent, having information of another agent's ID, can establish a connection to that agent.~\hfill$\nabla$
\end{assumption}

The following assumption is also done in the previous literature~\cite{atman2021distributed,atman2022finite,atman2024distributed}.

\begin{assumption}\label{rem:4}
    Communication between nodes occur in a synchronous manner in discrete-time, e.g., by a clock or by the occurrence of external events.~\hfill$\nabla$
\end{assumption}
}

{\color{blue}
It is also important to emphasize the limited information conditions: at the start of the algorithm, agents are not required to have knowledge of other agents' IDs beyond their immediate neighbors, nor do they need to know the total number of agents in the network.}






\section{Distributed Graph Augmentation Protocols to Achieve Strong Connectivity in Multi-Agent Networks}\label{sec:main}

In what follows, we start by reviewing the minimum number of edges required to achieve a solution to~\MCDAP~-- see \Cref{lemma:s-t}. Next, in \Cref{prop:tight} and~\ref{prop:tight_set}, we introduce a characterization of the impact of adding edges towards a possible feasible solution to MCDAP, which we refer to as \emph{tight edges}. With such characterization in mind, we first present the proposed methodology to solve the~\MCDAP~in a distributed fashion (Algorithm~\ref{fig:schematics})\footnote{A centralized implementation of our method can be found on~{\color{blue}\href{https://github.com/xuizy/distributed\_network\_augmentation}{GitHub}}.}. Then, we present its soundness and computational complexity in~\Cref{th:sound} and~\Cref{th:complexity}, respectively.

For an agent $v \in \mathcal{V}$, the set of nodes in the network that $v$ can directly access information from is denoted by $\inneigh{v} = \{u : (u, v) \in \mathcal{E}\}$, the \textit{in-neighbors} of $v$. Conversely, the set of nodes in the network that $u$ can directly send information to is denoted by $\outneigh{u} = \{v : (u, v) \in \mathcal{E}\}$, the \textit{out-neighbors} of~$u$. 

A maximal subset of nodes $\mathcal S\subset \mathcal V$ such that for each pair of distinct nodes $u, v\in \mathcal S$ both $u\rightrsquigarrow v$ \underline{and} $v\rightrsquigarrow u$ is called a \textit{strongly connected component} (SCC). 
Moreover, let us define the set of edges that start in $\mathcal S$ but end outside  $\mathcal S$ as $\mathcal O_{\mathcal S}=\{(u,v)\,:\,u\in \mathcal S\text{ and }v\in\mathcal V\setminus\mathcal S\}$, and the set of edges that start outside $\mathcal S$ and end in $\mathcal S$ as $\mathcal I_{\mathcal S}=\{(u,v)\,:\,u\in \mathcal V\setminus\mathcal S\text{ and }v\in\mathcal S\}$. The SCC $\mathcal S$ is classified as: a source SCC (s-SCC) if $|\mathcal O_{\mathcal S}|\neq 0$ and $|\mathcal I_{\mathcal S}|=0$; a target SCC (t-SCC) if $|\mathcal O_{\mathcal S}|= 0$ and $|\mathcal I_{\mathcal S}|\neq 0$; a mixed SCC (m-SCC) if $|\mathcal O_{\mathcal S}|\neq 0$ and $|\mathcal I_{\mathcal S}|\neq 0$; and an isolated SCC (i-SCC) if $|\mathcal O_{\mathcal S}|=|\mathcal I_{\mathcal S}|=0$. We define $\gamma(\mathcal G)$ as the maximum between the number of s-SCCs and t-SCCs of $\mathcal G$.


\begin{lemma}[\hspace{-.001mm}\cite{eswaran1976augmentation}]\label{lemma:s-t}
    Given a weakly connected digraph $\mathcal G=(\mathcal V, \mathcal E)$ with $\alpha$ s-SCCs and $\beta$ t-SCCs, any solution to~\mbox{\MCDAP} has $\gamma(\mathcal G):=\max\{\alpha, \beta\}$ edges. 
\end{lemma}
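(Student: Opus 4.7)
The plan is to prove the claim by establishing $\gamma(\mathcal G) = \max\{\alpha,\beta\}$ as both a lower and an upper bound on the size of any feasible augmentation.

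For the \textbf{lower bound}, let $\mathcal E'$ be any edge set such that $(\mathcal V, \mathcal E\cup\mathcal E')$ is strongly connected. For each s-SCC $\mathcal S$ of $\mathcal G$, the definition gives $|\mathcal I_{\mathcal S}|=0$ in the original graph, so for any node outside $\mathcal S$ to reach $\mathcal S$ after augmentation, $\mathcal E'$ must contain at least one edge whose head lies in $\mathcal S$. Since distinct s-SCCs are pairwise disjoint subsets of $\mathcal V$ and each edge has a single head, the edges contributed by different s-SCCs are disjoint, giving $|\mathcal E'|\geq \alpha$. A symmetric argument applied to t-SCCs (each requiring a new outgoing edge with a distinct tail) gives $|\mathcal E'|\geq \beta$. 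Combining, $|\mathcal E'|\geq \max\{\alpha,\beta\}=\gamma(\mathcal G)$.

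For the \textbf{upper bound}, I would exhibit an explicit augmentation of size exactly $\max\{\alpha,\beta\}$. The construction works on the condensation DAG $\mathcal D$ of $\mathcal G$, in which s-SCCs and t-SCCs appear as sources and sinks respectively, and m-SCCs are internal vertices; weak connectivity of $\mathcal G$ rules out isolated vertices in $\mathcal D$, so every m-SCC lies on some directed path from a source to a sink. Assume without loss of generality that $\alpha\leq\beta$. The idea is to interleave all sources and sinks into a single cyclic sequence of the form $(S_{\sigma(1)}, T_{\tau(1)}, S_{\sigma(2)}, T_{\tau(2)}, \ldots, S_{\sigma(\alpha)}, T_{\tau(\alpha)}, T_{\tau(\alpha+1)}, \ldots, T_{\tau(\beta)})$ and add exactly $\beta$ new back-edges linking consecutive entries, closing the sequence at the end back to $S_{\sigma(1)}$, while using existing paths in $\mathcal D$ from each $S_{\sigma(i)}$ to its paired $T_{\tau(i)}$. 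The resulting cycle passes through every source and sink, and because each m-SCC is sandwiched between some source and some sink in $\mathcal D$, it is absorbed into the single giant SCC thereby formed.

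The \textbf{main obstacle} is showing that the cyclic ordering can always be selected so that every induced source-to-sink pair $(S_{\sigma(i)}, T_{\tau(i)})$ is realized by an existing path in $\mathcal D$. When the bipartite reachability relation between sources and sinks admits a perfect matching (which happens when Hall's condition holds), this is direct. When Hall's condition fails, certain sources or sinks must be ``re-used'' in the cyclic order via auxiliary chaining through m-SCCs, and a careful accounting argument is required to verify that the total edge count still does not exceed $\max\{\alpha,\beta\}$. This combinatorial analysis, together with the handling of degenerate cases, constitutes the technical heart of the argument; I would follow the detailed construction of Eswaran and Tarjan~\cite{eswaran1976augmentation} to close this step.
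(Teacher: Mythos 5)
The paper does not prove this lemma at all; it imports it directly from Eswaran and Tarjan, so there is no in-paper argument to compare against and your proposal must stand on its own. Your lower bound is correct and complete: every s-SCC has no incoming edge in $\mathcal E$, so $\mathcal E'$ must supply an edge whose head lies in it, distinct s-SCCs force distinct such edges, and the symmetric tail argument for t-SCCs gives $|\mathcal E'|\ge\max\{\alpha,\beta\}$. The one substantive issue is in how you frame the obstacle in the upper bound. You pose it as whether the source--sink reachability relation in the condensation satisfies Hall's condition, but a perfect matching need not exist even when $\alpha=\beta$ --- take sources $S_1,S_2,S_3$ and sinks $T_1,T_2,T_3$ with edges $S_1\to T_1$, $S_1\to T_2$, $S_1\to T_3$, $S_2\to T_3$, $S_3\to T_3$: here $N(\{T_1,T_2\})=\{S_1\}$, yet three edges (e.g.\ $T_1\to S_2$, $T_3\to S_1$, $T_2\to S_3$) suffice. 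The cited argument does not repair a failed matching by ``re-using'' sources or sinks; instead it takes a \emph{maximal} family of vertex-disjoint reachable pairs $(v_1,w_1),\dots,(v_p,w_p)$, closes them into one cycle with the $p$ edges $w_i\to v_{i+1\bmod p}$, and then uses maximality to conclude that every leftover source reaches some matched sink and every leftover sink is reached from some matched source; pairing leftover sinks to leftover sources one-to-one and attaching the surplus to the cycle gives $p+(\min\{\alpha,\beta\}-p)+(\max\{\alpha,\beta\}-\min\{\alpha,\beta\})=\max\{\alpha,\beta\}$ edges in total, with every m-SCC absorbed because each lies on a source-to-sink path. Since you explicitly defer to the reference for exactly this step --- the same step the paper defers --- the proposal is acceptable as a citation-backed sketch, but as written the Hall's-condition framing would lead you down the wrong road if you tried to make it self-contained.
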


We now characterize the impact of introducing the potential edges identified as part of the feasible solution to~\MCDAP, as described in~\Cref{lemma:s-t}. Specifically, given a weakly connected digraph $\mathcal G=(\mathcal V, \mathcal E)$ with $\alpha$ s-SCCs and $\beta$ t-SCCs, an \textit{augmenting edge} $(t,s)\in(\mathcal V\times\mathcal V)\setminus\mathcal E$ is a \textit{tight edge} if the following conditions hold:
    \noindent $(i)\,$ $s$ belongs to an s-SCC; $(ii)\,$ $t$ belongs to a t-SCC;
    $(iii)\,$ either $\alpha=1$ or there exists $s'$ in an s-SCC other than the one of $s$ s.t. $s'\rightrsquigarrow t$;
    $(iv)\,$ either $\beta=1$ or there exists $t'$ in a t-SCC other than the one of $t$ s.t. $s\rightrsquigarrow t'$.
Subsequently, we obtain the following result.

\begin{prop}\label{prop:tight}
    Given a weakly connected digraph $\mathcal G=(\mathcal V, \mathcal E)$ with a tight edge $e$, then $\gamma(\mathcal G+e)=\gamma(\mathcal G)-1$.~\hfill$\triangle$
\end{prop}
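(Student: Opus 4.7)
The plan is to analyze the condensation DAG of $\mathcal{G}$ and track how $e = (t, s)$, viewed as an edge in the condensation from the t-SCC $T \ni t$ to the s-SCC $S \ni s$, modifies the numbers $\alpha$ of source-SCCs and $\beta$ of sink-SCCs.

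The first step is to identify which SCCs of $\mathcal{G}$ coalesce after inserting $e$. The only new cycles in the condensation lie on a route $S \rightrsquigarrow \dots \rightrsquigarrow T \to S$ using the new edge, so the SCCs that merge are exactly $M^* = \{C : S \rightrsquigarrow C \rightrsquigarrow T \text{ in } \mathcal{G}\}$; they collapse into a single new SCC $\hat M$ of $\mathcal{G}+e$. If $S \not\rightrsquigarrow T$, then $M^* = \emptyset$ and the SCC partition is unchanged. Every SCC $C \notin M^*$ is preserved, and its condensation neighborhoods differ from before only by collapsing former members of $M^*$ into $\hat M$ --- no genuinely new neighbor relations appear.

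Next I would count the sources and sinks of the new condensation. Any source $S_j \neq S$ cannot belong to $M^*$, because $S \rightrsquigarrow S_j$ would contradict $S_j$ being source-free; dually for any sink $T_j \neq T$. Hence $S_2, \dots, S_\alpha$ persist as s-SCCs and $T_2, \dots, T_\beta$ as t-SCCs of $\mathcal{G}+e$, while $S$ loses source status (it has incoming from $T$ via $e$) and $T$ loses sink status. Since adding one edge can only enlarge in- and out-neighbor sets, no previously non-source SCC can become a source in $\mathcal{G}+e$, so the only possible extra source or sink is $\hat M$ itself.

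The crux is classifying $\hat M$ using the tightness conditions. If $\alpha \geq 2$, condition (iii) supplies a path $S' \rightrsquigarrow T$ from some source $S' \neq S$; since $S' \notin M^*$ but $T \in M^*$, this path must contain an edge crossing from outside $M^*$ into $M^*$, giving $\hat M$ an in-neighbor, so $\hat M$ is not a source. Symmetrically, condition (iv) with $\beta \geq 2$ forces $\hat M$ to have an out-neighbor. In the degenerate cases, $\alpha = 1$ makes every SCC reachable from $S$, so $M^*$ contains every SCC that reaches $T$ and $\hat M$ becomes the lone source; $\beta = 1$ is symmetric; and if both hold, $M^*$ engulfs all of $\mathcal{V}$ and $\mathcal{G}+e$ is strongly connected. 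Assembling the counts, the new graph has $\max\{\alpha - 1, 1\}$ s-SCCs (or $0$ when $\alpha = \beta = 1$), and symmetrically for t-SCCs, which via \Cref{lemma:s-t} yields $\gamma(\mathcal{G}+e) = \gamma(\mathcal{G}) - 1$. The main obstacle is the bookkeeping around $\hat M$: verifying that the tightness conditions (iii)--(iv) are exactly what prevents $\hat M$ from reappearing as an extra source or sink, and checking that no SCC $C \notin M^*$ stealthily changes type as its neighborhood is reshaped by the merge.
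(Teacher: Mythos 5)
Your proof is correct and takes essentially the same route as the paper's: a case analysis on whether $\alpha$ and $\beta$ exceed $1$, using the witnesses $s'$ and $t'$ supplied by tightness conditions (iii) and (iv) to show that the component created by the new edge is neither a source nor a sink, and then counting the surviving s- and t-SCCs. You are in fact more careful than the paper in explicitly identifying the merge set $M^*$ and verifying that no SCC outside it changes type; the only small slip is the closing appeal to \Cref{lemma:s-t}, which concerns the optimal augmentation size and is not needed --- the conclusion follows directly from the definition $\gamma(\mathcal G)=\max\{\alpha,\beta\}$.
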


\begin{proof}
    Let $\mathcal G=(\mathcal V, \mathcal E)$ with $\alpha$ s-SCCs and $\beta$ t-SCCs and a tight edge $e=(t,s)$. 
In particular, $\alpha,\beta > 0$. 
Let $\alpha',\beta'$ be the number of s-SCCs and t-SCCs of $\mathcal G+e$, respectively. 
We consider the following scenarios: 
    
    Suppose $\alpha,\beta> 1$, then by $(iii)$ and $(iv)$ there exists $s'$ in an s-SCC other than the one of $s$ s.t. $s'\rightrsquigarrow t$ and there exists $t'$ in a t-SCC other than the one of $t$ s.t. $s\rightrsquigarrow t'$. 
    Therefore, in $\mathcal G+e$ there is the path $s'\rightrsquigarrow t\rightrsquigarrow s\rightrsquigarrow t'$. 
    Since $s'$ remains in an s-SCC and $t'$ remains in a t-SCC in $\mathcal G+e$ and $s$ and $t$ now belong to an m-SCC, then $\gamma(\mathcal G+e)=\max\{\alpha', \beta'\}=\max\{\alpha-1, \beta-1\}=\max\{\alpha, \beta\}-1=\gamma(\mathcal G)-1$. 

    Now, suppose that $\alpha=1$ and $\beta>1$. By $(iv)$, there exists $t'$ in a t-SCC other than the one of $t$ s.t. $s\rightrsquigarrow t'$. 
    Thus, in $\mathcal G+e$ there is the path $t\rightrsquigarrow s\rightrsquigarrow t'$. 
    Since $t'$ remains in a t-SCC in $\mathcal G+e$ and $t$ now belongs to an s-SCC, then $\gamma(\mathcal G+e)=\max\{\alpha', \beta'\}=\max\{1, \beta-1\}=\beta-1=\gamma(\mathcal G)-1$. 
    The cases $(\alpha>1,\beta=1)$ and $(\alpha=1,\beta=1)$ are similar.
\end{proof}

We can extend this definition to a set of augmenting edges. 
$\{(t_1,s_1),\ldots,(t_k,s_k)\}$ is a \textit{tight set} if each $(t_i,s_i)$ is tight and the SCCs associated with $t_1,\ldots,t_k,s_1,\ldots,s_k$ are all distinct. Thus, we obtain the following result.

\begin{prop}\label{prop:tight_set}
    Given a weakly connected digraph $\mathcal G=(\mathcal V, \mathcal E)$ with $\alpha$ s-SCCs and $\beta$ t-SCCs and a tight set $\mathcal T=\{(t_1,s_1),\ldots,(t_k,s_k)\}$, then $\gamma(\mathcal G+\mathcal T)=\gamma(\mathcal G)-k$.~\hfill$\triangle$
\end{prop}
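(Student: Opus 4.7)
My plan is induction on $k$, using Proposition~1 as the engine. The base case $k=1$ is Proposition~1 directly. For the inductive step, assume the statement for tight sets of size at most $k-1$, and consider a tight set $\mathcal T=\{(t_1,s_1),\ldots,(t_k,s_k)\}$. I would introduce one tight edge, say $e=(t_k,s_k)$, to obtain $\mathcal G'=\mathcal G+e$ with $\gamma(\mathcal G')=\gamma(\mathcal G)-1$ by Proposition~1. It then suffices to show that $\mathcal T'=\{(t_1,s_1),\ldots,(t_{k-1},s_{k-1})\}$ remains a tight set in $\mathcal G'$; applying the inductive hypothesis to $\mathcal T'$ in $\mathcal G'$ closes the proof, since $\mathcal G+\mathcal T=\mathcal G'+\mathcal T'$.

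The first task in verifying that $\mathcal T'$ is tight is to argue that the SCCs of $s_i, t_i$ for $i<k$ are unaffected by adding $e$. The new edge can only merge SCCs lying on a path from $s_k$ to $t_k$ in $\mathcal G$. Because $s_i$ belongs to an s-SCC distinct from $s_k$'s, no node outside $s_i$'s SCC can reach $s_i$ (an s-SCC has no external incoming edges), so $s_i$ is not on any such path. Symmetrically, $t_i$ cannot reach $t_k$ in $\mathcal G$, so it is not on any such path either. Hence these SCCs survive in $\mathcal G'$ intact and remain pairwise distinct, and since $e$ introduces no new incoming (resp.\ outgoing) edges at the SCC of $s_i$ (resp.\ $t_i$), conditions (i) and (ii) are inherited from $\mathcal G$.

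The main obstacle will be verifying conditions (iii) and (iv) in $\mathcal G'$. If the original witness $s'$ for condition (iii) of $(t_i,s_i)$ in $\mathcal G$ lies in an s-SCC other than $s_k$'s, it trivially remains a witness in $\mathcal G'$, since reachability is preserved and its s-SCC status survives. The delicate case is when every candidate witness sits in $s_k$'s (now absorbed) s-SCC. Here I would leverage the tightness of $(t_k,s_k)$ itself: it supplies some $s'' \rightrsquigarrow t_k$ in $\mathcal G$ with $s''$ outside $s_k$'s SCC, and chaining with the new edge yields $s'' \rightrsquigarrow t_k \to s_k \rightrsquigarrow t_i$ in $\mathcal G'$, furnishing an alternative witness; symmetric reasoning handles (iv). Ensuring that this new witness also lies outside $s_i$'s (resp.\ $t_i$'s) SCC is where the distinctness of the tight-set SCCs, combined with the boundary cases $\alpha'=1$ or $\beta'=1$ which make (iii) or (iv) vacuous, should close every branch of the case analysis.
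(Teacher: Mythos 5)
Your strategy---induction on $k$ with \Cref{prop:tight} as the engine, reducing the inductive step to the claim that the remaining $k-1$ edges stay tight after one tight edge is inserted---is exactly the paper's, which states the same reduction in one sentence and simply asserts the non-interference. Your verification that conditions $(i)$ and $(ii)$ survive (the s-SCCs of the $s_i$ and the t-SCCs of the $t_i$ cannot be absorbed by the new cycle, since nothing enters an s-SCC and nothing leaves a t-SCC) is correct and already more careful than the paper. The genuine gap is the one you flag in your last sentence and leave open: after adding $e=(t_k,s_k)$, the only witness for condition $(iii)$ of $(t_i,s_i)$ may have lived in $s_k$'s s-SCC, which is no longer an s-SCC in $\mathcal G+e$; the replacement witness $s''$ you manufacture from condition $(iii)$ of $(t_k,s_k)$ may itself lie in $s_i$'s SCC, and neither the distinctness clause in the definition of a tight set nor the boundary cases $\alpha'=1$, $\beta'=1$ rules this out or supplies a third witness. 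That branch does not close.

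In fact it cannot close, because the proposition is false with the paper's definition of a tight set. Take $\mathcal V=\{a,b,c,u_1,u_2,u_3\}$ and $\mathcal E=\{(a,u_1),(a,u_2),(a,u_3),(b,u_1),(c,u_3)\}$. This digraph is weakly connected, every SCC is a singleton, the s-SCCs are $\{a\},\{b\},\{c\}$ and the t-SCCs are $\{u_1\},\{u_2\},\{u_3\}$, so $\gamma(\mathcal G)=3$. The set $\mathcal T=\{(u_1,a),(u_2,b)\}$ is tight: for $(u_1,a)$ take the witnesses $s'=b$ and $t'=u_2$, for $(u_2,b)$ take $s'=a$ and $t'=u_1$, and the four endpoint SCCs are pairwise distinct. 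Yet in $\mathcal G+\mathcal T$ the nodes $a,u_1,u_2,b$ collapse into a single SCC (via the cycle $a\to u_2\to b\to u_1\to a$) which is a source SCC, $\{c\}$ remains a second source SCC, and $\{u_3\}$ remains the unique target SCC, so $\gamma(\mathcal G+\mathcal T)=2\neq\gamma(\mathcal G)-2=1$. The two edges' witnesses cross-reference each other's endpoint SCCs ($b$ witnesses $(iii)$ for $(u_1,a)$ while $\{b\}$ is the s-SCC consumed by $(u_2,b)$, and symmetrically), which is precisely the interference the induction must exclude. So neither your argument nor the paper's can be completed as written; the definition of a tight set must be strengthened (e.g.\ by requiring each edge's witness SCCs to be disjoint from the endpoint SCCs of the other edges, or by requiring an ordering of $\mathcal T$ under which each edge remains tight after its predecessors are added) before the statement becomes provable.
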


\begin{proof}
    The proof follows by induction on the number of elements of $\mathcal T$ by repeatedly applying~\Cref{prop:tight} and noticing that adding a tight edge $(t_i,s_i)$ does not interfere with the tightness of another edge $(t_j,s_j)$ as long as $t,t',s,s'$ belong to different SCCs. 
\end{proof}

With the characterization in~\Cref{prop:tight_set}, we propose to determine the different \mbox{s- and} \mbox{t-SCCs} in a fully distributed fashion in a first phase, followed by different phases entailing a cooperation between representative agents in different \mbox{s- and} \mbox{t-SCCs} towards achieving the characterization in~\Cref{prop:tight_set}, to achieve a tight set. The proposed method can be decomposed into four procedures, which we call phases, interacting as in~Algorithm~\ref{fig:schematics}. 

\begin{alg}[!ht]
    \centering
\begin{tikzpicture}[scale=.45, very thick, every node/.style={scale=0.45}, node distance=1.5cm]

\node (alg1) [io]{
$
\begin{array}{c}
\text{\Large Phase~1}\\\text{ Computation of SCCs}
\end{array}
$};
\node (finish) [io3, right=0.5cm of alg1] {
\Large Finish};
\node (alg4) [io4, below left=.7cm of alg1] {
$
\begin{array}{c}
\text{\Large Phase~4}\\\text{ Selections from targets}\\[-1mm]
\text{ and augmentation}
\end{array}
$};

\node (alg2) [io4,  below=0.55cm of finish] {
$
\begin{array}{c}
\text{\Large Phase~2}\\\text{ Proposals from targets}
\end{array}
$};

\node (alg3) [process, below=1.4cm of alg1] {$
\begin{array}{c}
\text{\Large Phase~3}\\\text{ Selections and proposals}\\[-1mm] 
\text{ from targets}
\end{array}
$};

\draw [arrow, loop left, draw=black!60] (alg1) to (alg1);
\draw [arrow, draw=black!60] (alg1) -- (alg2);
\draw [arrow, dotted, bend left=15, draw=black!60] (alg2) to (alg3);
\draw [arrow, bend right=25, draw=black!60] (alg1) to (alg3);
\draw [arrow, bend right=25, draw=black!60] (alg3) to (alg1);
\draw [arrow, bend left=62, draw=black!60] (alg2) to (alg4);
\draw [arrow, dotted, bend left=15, draw=black!60] (alg3) to (alg4);
\draw [arrow, draw=black!60] (alg4) -- (alg1);
\draw [arrow, draw=black!60] (alg1) -- (finish);


\end{tikzpicture}
    \caption{\color{blue}\footnotesize Schematics of distributed digraph augmentation protocol to address \MCDAP. 
    The scheme starts in the rounded (green) node, ``Phase~\ref{phase:sccs}'', and ends in the diamond (red) node, ``Finish''. 
    Solid arrows represent the possible node transition between phases of the algorithm.
    Dotted arrows represent non-local communication between (source and target) agents.}
    \label{fig:schematics}
\end{alg}

To further describe the algorithm, we require the notation $\sendmsg{u}{v}{i}{m}$ (or, $\send{u}{v}{i}$) to denote that agent $u$ send the message $m$ (or, some message) to agent $v$ in phase $i$, via short-range communication, and $\receivemsg{u}{v}{i}{m}$ (or, $\receive{u}{v}{i}$) to denote that agent $u$ receives the message $m$ (or, some message) from agent $v$ in phase~$i$, via short-range communication. 
If the communication is long-range, we replace the symbols ``$\gg$'' and ``$\ll$'' with ``$\ggg$'' and ``$\lll$'', respectively. 
Additionally, we denote by \textsc{Tarjan} the classical SCCs algorithm by Tarjan~\cite{tarjan1972depth}.

Phase~\ref{phase:sccs} consists of the computations of the SCCs, where an agent $u$ communicates with the out-neighbors until there is no novel information to send. Next,  each agent reconstruct the subdigraph with the information that reached it, compute the subdigraph SCCs and classifies the SCC it belongs to. Depending on the classification, the agent can exit or jump to the start of either Phase~\ref{phase:sccs}, \ref{phase:targetspropose} or \ref{phase:sourcespropose}.

\captionsetup[algorithm]{name=Phase}
\begin{algorithm}[!ht]
\caption{\small Computations of SCCs (by an agent $u$)}\label{phase:sccs}
\begin{algorithmic}[1]
    \scriptsize
    \Statex{\phase{Edge propagation}}
    \State{ $x_u^{(1)}\gets \{(u,v)\,:\,v\in\outneigh{u}\}$}\label{a1:l1}\comment{\color{blue}each agent knows the edges from it to each out-neighbor}
    \State $k\gets 1$
    \While{$x_u^{(k-1)}\neq x_u^{(k)}$}\comment{repeats until no new information}
    \State $\nu_u^{(k)}\gets x_u^{(k)}\setminus x_u^{(k-1)}$\comment{computes the new edges}
    \State $\sendmsg{u}{v}{1}{\nu_u^{(k)}}$ to all $v\in\outneigh{u}$
    \quad and\quad $\receivemsg{u}{v}{1}{\nu_v^{(k)}}$ from all $v\in\inneigh{u}$
    \State $x_u^{(k+1)}\gets x_u^{(k)}\cup\displaystyle\bigcup_{v\in\inneigh{u}}\nu_v^{(k)}$
    \comment{updates the known edges}
    \State $k\gets k+1$
    \EndWhile\label{a1:l9}
    \Statex\phase{SCCs from local knowledge}
    \State $\mathcal S_u\gets\textsc{Tarjan}\left(x_u^{(k)}\right)$\comment{computes the SCCs from the known edges\label{line:tarjan} {\color{blue} a mapping that assigns}} 
    \Statex{\commentContinue{\color{blue}nodes in  $x_u^{(k-1)}$ to the known SCCs, i.e., $\mathcal S_u[v]=i$ if}}
    \Statex{\commentContinue{\color{blue}$v\in x_u^{(k-1)}$ and $v$ is in the $i$th computed SCC}}

{\color{blue}
    \State $\mathcal S_u[u]$ is an s-SCC if $\mathcal A=\{(u, v)\,:\, (u, v) \in x_u^{(k)}, u\notin\mathcal S_u[u]\text{ and }v\in\mathcal S_u[u]\} = \emptyset$
    \comment{\color{blue}it is a source if no node outside the SCC reaches the SCC}
    \State $\mathcal S_u[u]$ is a t-SCC if $\mathcal B=\{(u, v)\,:\, (u, v) \in x_u^{(k)}, u\in\mathcal S_u[u]\text{ and }v\notin\mathcal S_u[u]\} = \emptyset$
    \comment{\color{blue}it is a target if no node outside the SCC is reached by the SCC}
    \State $\mathcal S_u[u]$ is an m-SCC if $\mathcal A \neq \emptyset$ and $\mathcal B \neq \emptyset$
    \comment{\color{blue}it is an m-SCC if nodes outside the SCC reach the SCC and nodes outside the SCC are reached by the SCC}
    }

    \If{$\mathcal S_u[u]$ is an i-SCC\label{a1:11}}\comment{digraph is strongly connected}
        \State{\textbf{exit}}   
    \ElsIf{$\mathcal S_u[u]$ is an s-SCC and $u=\min \mathcal S_u[u]$}\comment{representative is lowest ID agent}
        \State{\textbf{go to} Phase~\ref{phase:sourcespropose}}
    \ElsIf{$\mathcal S_u[u]$ is a t-SCC and $u=\min \mathcal S_u[u]$}\comment{representative is lowest ID agent}
        \State{\textbf{go to} Phase~\ref{phase:targetspropose}}
    \Else\comment{$\mathcal S_u[u]$ is neither an s-SCC nor a t-SCC, or $u$ is not a representative}
        \State{\textbf{go to} Phase~\ref{phase:sccs}\comment{wait until next round}}
    \EndIf\label{a1:19}
\end{algorithmic}
\end{algorithm}

We use an illustrative example to see the different phases of the algorithm. 
Given $\mathcal G=(\mathcal V,\mathcal E)$, we associate with it a \textit{directed acyclic graph} (DAG) representation $\mathcal D=(\mathcal V',\mathcal E')$, where 
$\mathcal V'$is the set of SCCs of $\mathcal G$ and $(s_1,s_2)\in\mathcal E'$ \textit{if and only if} $s_1\neq s_2$ and there is $(u,v)\in\mathcal E$ s.t. $u\in s_1$ and $v\in s_2$. 

\vspace{1mm}
\hrule
\vspace{1mm}
{\footnotesize
\underline{\textit{Illustrative example}:} Consider, as running example, the following network. 
\begin{center}
\begin{tikzpicture}[scale=.4, transform shape,node distance=1.5cm]
\begin{scope}[every node/.style={circle,thick,draw},square/.style={regular polygon,regular polygon sides=4}]
\node (1) at (7.80718,0.) {\Large $1$};
\node (2) at (10.5048,0.) {\Large $2$};
\node (3) at (12.8161,0.) {\Large $3$};
\node (4) at (0.,0.) {\Large $4$};
\node (5) at (2.31048,0.) {\Large $5$};
\node (6) at (5.00628,0.) {\Large $6$};
\end{scope}
\begin{scope}
    \node [fit=(1), c3, draw=c2!90, very thick, inner sep=3pt] {};
    \node [fit=(4) (5), c3, draw=c2!90, very thick, inner sep=3pt] {};
    \node [fit=(6), c3, draw=c1!70, very thick, inner sep=3pt] {};
    \node [fit=(2) (3), c3, draw=c1!70, very thick, inner sep=3pt] {};
\end{scope}
\begin{scope}[>={Stealth[black]},
              every edge/.style={draw=black, thick}]
\path [->] (1) edge node {} (2);
\path [->] (2) edge[bend right=15] node {} (3);
\path [->] (3) edge[bend right=15] node {} (2);
\path [->] (4) edge[bend right=15] node {} (5);
\path [->] (5) edge[bend right=15] node {} (4);
\path [->] (5) edge node {} (6);
\path [->] (1) edge node {} (6);
\end{scope}
\end{tikzpicture}
\end{center}
\vspace{-2.5mm}
}
{\footnotesize After Phase~\ref{phase:sccs}, each agent knows the subgraph of the DAG below consisting of nodes and edges that form paths reaching the agent. 
The yellow nodes are s-SCCs and the blue ones are t-SCCs. Also, the representative agents in each SCC are marked in bold.} 

\vspace{1mm}
\hrule
\vspace{1mm}

Subsequently, in~Phase~\ref{phase:targetspropose}, a representative agent in each \mbox{t-SCC} establishes a connection to a representative agent in each s-SCC that reaches it, and proposes to create a new edge from that t-SCC to the s-SCC. We select as representative agent the one with smallest ID, without loss of generality, as any other agent in the SCC would serve the purpose. 

\begin{algorithm}[!ht]
\caption{\small Proposals from targets}
\label{phase:targetspropose}
\begin{algorithmic}[1]
    \scriptsize
        \State{$\representative{S}_u\gets\emptyset$\comment{\color{blue}set of representatives for the SCCs known by $u$}}
        \For{each s-SCC $ S\in\mathcal S_u$}
                \State{$\representative{S}_u\gets \representative{S}_u\cup\displaystyle\mathop{\arg\min}_{v'\in S}v'$\comment{representative is lowest ID agent}}
            \EndFor
        \State{$\sendlongmsg{u}{s}{2}{ \representative{S}_u}$ to all $s\in\representative{S}_u$\label{step:phase2send}\comment{target proposes to sources}}
        \State{\textbf{go to} Phase~\ref{phase:augmentation}}
\end{algorithmic}
\end{algorithm}

\vspace{1mm}
\hrule
\vspace{1mm}
{\footnotesize
\underline{\textit{Illustrative example}:} In Phase~\ref{phase:targetspropose}, we have the following communications: $(\sendlongmsg{2}{1}{2}{\{1\}})$; $(\sendlongmsg{6}{1}{2}{\{1,4\}})$; and $(\sendlongmsg{6}{4}{2}{\{1,4\}})$.
}
\vspace{1mm}
\hrule
\vspace{1mm}


In~Phase~\ref{phase:sourcespropose}, a representative agent in each \mbox{s-SCC} that received a proposal, selects one of the  proposals corresponding to an edge that, if added, reduces the number of s-SCCs, and proposes to that t-SCC. 
If there is exactly one s-SCC, its representative adds all edges required to make the graph strongly connected. 

\begin{algorithm}[!ht]
\caption{\small Selections and proposals from sources}
\label{phase:sourcespropose}
\begin{algorithmic}[1]
    \scriptsize
        \State{$P_u\gets\{t\,:\,\sendlong{t}{u}{2}\}$\comment{all targets that proposed to $u$}}

        \State{$\receivelongmsg{u}{t}{2}{ \representative{S}_t}$ from all $t\in P_u$\comment{dual of Phase~\ref{phase:targetspropose}, \cref{step:phase2send}}}
        \If{there exists $t\in P_u$ s.t. $|\representative{S}_{t}|>1$}\comment{prefer targets reachable by multiple sources}
            \State{\textbf{select} some $t\in P_u$ s.t. $|\representative{S}_{t}|>1$}
            \State{$\sendmsg{u}{t}{3}{P_u}$ \label{step:phase3send}\comment{source proposes to target}}
        \Else\comment{there is exactly one s-SCC}
            \State{$\mathcal E^\ast\gets\mathcal E^\ast\cup\{(t,u)\,:\,\text{for all }t\in P_u\}$\comment{add edges to the digraph}}\label{a3:l7}
        \EndIf
    \State{\textbf{go to} Phase~\ref{phase:sccs}\comment{wait until next round}}
\end{algorithmic}
\end{algorithm}
\vspace{1mm}
\hrule
\vspace{1mm}
{\footnotesize
\underline{\textit{Illustrative example}:} In Phase~\ref{phase:sourcespropose}, we have the following received communications: $(\receivelongmsg{1}{2}{2}{\{1\}})$; $(\receivelongmsg{1}{6}{2}{\{1,4\}})$; and $(\receivelongmsg{4}{6}{2}{\{1,4\}})$. 
Moreover, we have the following sent communications: $(\sendmsg{1}{6}{3}{\{2,6\}})$; and $(\sendmsg{4}{6}{3}{\{6\}})$.
}
\vspace{1mm}
\hrule
\vspace{1mm}

Finally, in~Phase~\ref{phase:augmentation}, a representative agent in each t-SCC that received a proposal, accepts the proposal corresponding to an edge that, if added, reduces the number t-SCCs, and creates the new edge from the representative agent in that t-SCC to the representative one of the selected s-SCC. 
If there is exactly one t-SCC, its representative adds all edges required to make the graph strongly connected. 
\vspace{1mm}
\hrule
\vspace{1mm}
{\footnotesize
\underline{\textit{Illustrative example}:} In Phase~\ref{phase:augmentation}, the representative agent $6$ selects the representative $1$ and, after one round, the network becomes the following. 
\vspace{-.5mm}
\begin{center}
\begin{tikzpicture}[scale=.4, transform shape,node distance=1.5cm]
\begin{scope}[every node/.style={circle,thick,draw},square/.style={regular polygon,regular polygon sides=4}]
\node (1) at (7.80718,0.) {\Large $1$};
\node (2) at (10.5048,0.) {\Large $2$};
\node (3) at (12.8161,0.) {\Large $3$};
\node (4) at (0.,0.) {\Large $4$};
\node (5) at (2.31048,0.) {\Large $5$};
\node (6) at (5.00628,0.) {\Large $6$};
\end{scope}
\begin{scope}[>={Stealth[black]},
              every edge/.style={draw=black, thick}]
\path [->] (1) edge node {} (2);
\path [->] (2) edge[bend right=15] node {} (3);
\path [->] (3) edge[bend right=15] node {} (2);
\path [->] (4) edge[bend right=15] node {} (5);
\path [->] (5) edge[bend right=15] node {} (4);
\path [->] (5) edge node {} (6);
\path [->] (1) edge[bend right=15] node {} (6);
\end{scope}
\begin{scope}[>={Stealth[cadmiumred]}, every edge/.style={draw=cadmiumred, thick}]
\path [->] (6) edge[bend right=15] node[fill=gray!20, text=black, draw=black, thin, solid] {\Large 1} (1);
\end{scope}
\end{tikzpicture}
\end{center}
\vspace{-3mm}
Note that in the new network, instead of having $\alpha=\beta=2$ (two s-SCCs and 2 t-SCCs), we now have $\alpha=\beta=1$. 
}
\vspace{1mm}
\hrule
\vspace{1mm}

\begin{algorithm}[!ht]
\caption{\small Selections from targets and augmentation}
\label{phase:augmentation}
\begin{algorithmic}[1]
    \scriptsize
        \State{$P_u\gets\{s\,:\,\send{s}{u}{3}\}$\comment{all sources that proposed to $u$}}
        \If{$P_u=\emptyset$}
            \State{\textbf{go to} Phase~\ref{phase:sccs}\comment{wait until next round}}
        \EndIf
        \State{$\receivemsg{u}{s}{3}{ P_s
}$ from all $s\in P_u$\comment{dual of Phase~\ref{phase:sourcespropose}, \cref{step:phase3send}}}
        \If{there exists $s\in P_u$ s.t. $|P_{s}|>1$}\comment{prefer sources reaching multiple targets}
            \State{\textbf{select} some $s\in P_u$ s.t. $|P_{s}|>1$}
            \State{$\mathcal E^\ast\gets\mathcal E^\ast\cup(u,s)$\comment{add edge to the digraph}}
        \Else\comment{there is exactly one t-SCC}
            \State{$\mathcal E^\ast\gets\mathcal E^\ast\cup\{(u,s)\,:\,\text{for all }s\in P_s\}$\comment{add edges to the digraph}}\label{a4:l10}
        \EndIf
    \State{\textbf{go to} Phase~\ref{phase:sccs}\comment{wait until next round}}
    \end{algorithmic}
\end{algorithm}


\begin{theorem}\label{th:sound}
    The scheme in~Algorithm~\ref{fig:schematics} is sound, i.e.~ it computes a solution to~\MCDAP~in a distributed fashion.~\hfill$\circ$ 
\end{theorem}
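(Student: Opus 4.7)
The plan is to establish soundness in three steps: correctness of the SCC-classification in Phase~\ref{phase:sccs}, an invariant showing each outer iteration of Phases~\ref{phase:targetspropose}--\ref{phase:augmentation} strictly decreases $\gamma$ by the number of edges it inserts, and termination/optimality via Lemma~\ref{lemma:s-t}.

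First, I would verify that Phase~\ref{phase:sccs} classifies each agent's SCC correctly. The forward-propagation loop causes agent $u$ to eventually know every edge $(v,w)$ with $v\rightrsquigarrow u$, since $v$ broadcasts $(v,w)$ to its out-neighbours and the label then travels along every directed path ending at $u$. Hence, if $u$ belongs to an SCC $S$, the local view $x_u^{(k)}$ contains all edges starting in $S$ (every node of $S$ reaches $u$) and all edges ending in $S$ from outside (their endpoint in $S$ reaches $u$). This is enough for Tarjan's routine to recover $S$, and for the $\mathcal A,\mathcal B$ tests to classify $S$ as s-, t-, m-, or i-SCC without error.

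Next, for one outer iteration, I would split into two regimes. In the \emph{boundary regime} $\alpha=1$ (symmetrically $\beta=1$), the unique source (respectively target) triggers the \emph{else} branch of Phase~\ref{phase:sourcespropose} (respectively Phase~\ref{phase:augmentation}), inserting in one shot all required back-edges. A sequential application of Proposition~\ref{prop:tight} shows every inserted edge is tight in the current state --- condition~(iii) trivialises because $\alpha=1$ persists and~(iv) is witnessed by any still-unprocessed target --- so $\gamma$ drops from $\max\{\alpha,\beta\}$ to $0$ in a single round. In the \emph{generic regime} $\alpha,\beta\ge 2$, I would first prove a structural lemma: weak connectivity of $\mathcal G$ implies connectivity of the bipartite source--target reachability graph, so every source $s$ admits at least one $t\in P_s$ with $|\representative{S}_t|\ge 2$ (otherwise $\{s\}$ and its only-reachable targets would form an isolated bipartite component, contradicting $\alpha\ge 2$). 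Consequently every source triggers the \emph{if} branch of Phase~\ref{phase:sourcespropose} and proposes to one such target; each target committing via the \emph{if} branch of Phase~\ref{phase:augmentation} produces an edge $(t,s)$ satisfying~(i)--(ii) by construction,~(iii) via the source-side check $|\representative{S}_t|>1$, and~(iv) via the target-side check $|P_s|>1$. Since distinct committing targets select distinct sources, the collection of \emph{if}-committed edges involves pairwise distinct s- and t-SCCs and forms a tight set in the sense of Proposition~\ref{prop:tight_set}; for targets hitting the \emph{else} branch, I would sequentialise the insertions and apply Proposition~\ref{prop:tight} one edge at a time, observing that after each preceding \emph{if}-insertion the problem falls into a boundary subregime where the remaining conditions trivialise.

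Termination and optimality then follow: each iteration either exits in Phase~\ref{phase:sccs} (every SCC is an i-SCC and $\mathcal G$ is strongly connected) or strictly decreases the nonnegative integer $\gamma$ by the cardinality of its committed set, so after finitely many iterations the algorithm halts, having inserted exactly $\gamma(\mathcal G)$ edges (taken at the initial value) --- minimum by Lemma~\ref{lemma:s-t}. The main obstacle is the sequential tightness argument in the mixed scenario of the generic regime where the \emph{if} branch of Phase~\ref{phase:augmentation} fires at some targets and the \emph{else} branch at others: the committed set is then not a tight set in the strict sense of Proposition~\ref{prop:tight_set}, so a careful ordering of the insertions and bookkeeping of $\alpha,\beta$ between them is needed to reduce to repeated applications of Proposition~\ref{prop:tight}.
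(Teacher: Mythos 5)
Your overall architecture mirrors the paper's own proof: define a round, argue that the edges inserted in a round form (essentially) a tight set so that $\gamma$ drops by exactly the number of insertions via \Cref{prop:tight} and~\ref{prop:tight_set}, and telescope against \Cref{lemma:s-t}. Two of your additions are genuine improvements on the printed argument: the explicit check that the edge-propagation loop of Phase~\ref{phase:sccs} delivers to each agent every edge on a path into its SCC (the paper takes this for granted), and the structural lemma that weak connectivity with $\alpha\ge 2$ forces every source to reach some target $t$ with $|\representative{S}_{t}|\ge 2$ --- which is exactly what is needed to justify the ``there is exactly one s-SCC'' claim behind the \emph{else} branch of Phase~\ref{phase:sourcespropose}, asserted in one line in the paper.

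The genuine gap is the one you flag yourself, and your proposed repair does not close it. Your source-side lemma guarantees that, when $\alpha,\beta\ge 2$, every source fires the \emph{if} branch of Phase~\ref{phase:sourcespropose}; it does \emph{not} guarantee that every proposed-to target fires the \emph{if} branch of Phase~\ref{phase:augmentation}. A target $u$ may receive Phase-3 proposals only from sources $s$ with $P_s=\{u\}$, because the sources reaching $u$ that also reach other targets may legally ``select some'' other target. The resulting \emph{else}-branch edge $(u,s)$ has $s\nrightrsquigarrow t'$ for every other target $t'$, so condition $(iv)$ of tightness fails outright, and no reordering of insertions rescues it: adding that edge absorbs $s$ into a t-SCC, decreasing $\alpha$ but not $\beta$, hence leaving $\gamma$ unchanged whenever $\alpha\le\beta$. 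Concretely, with sources $s_1,s_2,s_3$, targets $u,t_2,t_3$ and condensation edges $s_1\to u$, $s_2\to u$, $s_2\to t_2$, $s_3\to t_2$, $s_3\to t_3$, if $s_2$ selects $t_2$ then $u$ hits the \emph{else} branch and the round inserts two edges while $\gamma$ drops only from $3$ to $2$. Note that the paper's proof has the identical soft spot: its case $(ii)$ declares edges $(t,s)$ with $s$ reaching only $t$ to be tight without verifying $(iv)$, and the ``exactly one t-SCC'' comment on the \emph{else} branch of Phase~\ref{phase:augmentation} is never established. Closing your argument requires either a coordination rule for the ``select some'' steps or a proof that this \emph{else} branch can only fire when $\beta=1$; neither is currently supplied.
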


\begin{proof}
    Consider a digraph $\mathcal G=(\mathcal V, \mathcal E)$ for which we apply~Algorithm~\ref{fig:schematics}. 
    We define a (distributed) round of the scheme in~Algorithm~\ref{fig:schematics} as a sequence of steps performed by all agents until they all return to the beginning of~Phase~\ref{phase:sccs}. 
    
    First, we claim that in each round there is at least one edge added to the digraph. 
    Suppose that $\mathcal G$ is not strongly connected. Then, it has at least one s-SCC and one t-SCC. 
    Thus, there is at least a representative of a t-SCC that executes~Phase~\ref{phase:targetspropose} proposing to at least one representative~$s$ of an s-SCC. Conversely, there is at least one representative of an s-SCC that receives at least one proposal and either augments the graph or proposes to some t-SCC representative. In the later case, there is at least a representative of a t-SCC receiving such a proposal and augmenting the network. 

    Second, we claim that each of the aforementioned edges, $(t,s)$, is tight. 
    There are three cases: $(i)$ $| S_u^\star|=1$ -- the case where there is an s-SCC $s$ that received proposals from  one or more t-SCCs, all of each are reachable only by that s-SCC. Since the digraph is weakly connected, then that is the only s-SCC ($\alpha=1$). Thus, all the added edges of the form $(t,s)$ with $t\in{S}_u^\star$ are tight. 
    $(ii)$ $| S_t^\ast|>1$ and $| P_s|=1$ -- this is the case where there is a t-SCC $t$ that received proposals from multiple s-SCCs, all of each reach only that t-SCC. Thus, all the added edges of the form $(t,s)$ with $s\in\mathcal{S}_t^\star$ are tight. $(iii)$ $| S_t^\star|>1$ and $|P_s|>1$ -- this is the case where an edge $(t,s)$ is added. Since $| S_t^\star|>1$, there exists $s'$ in an s-SCC other than the one of $s$ s.t. $s'\rightrsquigarrow t$ and, since $|P_s|>1$, there exists $t'$ in a t-SCC other than the one of $t$ s.t. $s\rightrsquigarrow t'$. Hence, the edge $(t,s)$ is tight. Moreover, each s-SCC proposes at most one edge in~Phase~\ref{phase:sourcespropose} and, subsequently, each t-SCC selects at most one edge for augmentation in~Phase~\ref{phase:augmentation}. Therefore, all endpoints $t_i,s_i$ of edges chosen in this manner belong to distinct SCCs, that is, the set of augmenting edges (in the given round) is tight.

    Finally, let $\mathcal G_0, \mathcal G_1,\ldots, \mathcal G_m$ be the digraphs at the end of each round. At any round $i$, the decrease $\gamma(\mathcal G_{i-1}) - \gamma(\mathcal G_i)$ is exactly the number of augmented edges, by~\Cref{prop:tight_set} and the previous paragraph. 
    When the scheme terminates, the digraph is strongly connected and, therefore, $\alpha_m=\beta_m=\gamma(\mathcal G_m)=0$. Thus, the set of edges produced by the algorithm has cardinality $\gamma(G_0)-\gamma(G_1)+\ldots+\gamma(G_{k-1})-\gamma(G_k)=\gamma(G_0)$. Thus, by~\Cref{lemma:s-t}, it is a solution to~\MCDAP. 
\end{proof}
\vspace{1mm}

    In what follows, we consider the computational complexity analysis of each phase, but we do not consider the steps that call other phases. The computational complexity of the combined phases in Algorithm~\ref{fig:schematics} is relegated to~\Cref{th:complexity}.

\begin{prop}\label{prop:alg1_complexity}
The time-complexity of Phase~\ref{phase:sccs} for an agent $u$ in a network $\mathcal G=(\mathcal V,\mathcal E)$ is $\mathcal O(\max\{|\inneigh{u}||\mathcal E|, |\mathcal V|+|\mathcal E|\})$.~\hfill$\triangle$
\end{prop}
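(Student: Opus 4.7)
The plan is to decompose Phase~\ref{phase:sccs} into two parts and bound each separately: (i) the propagation while-loop on lines~\ref{a1:l1}--\ref{a1:l9}, and (ii) the post-loop computations (Tarjan's algorithm on line~\ref{line:tarjan} together with the s/t/m/i classification of $\mathcal{S}_u[u]$).

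For part (i), I would first argue termination by induction on $k$: after round $k$, the set $x_u^{(k)}$ contains exactly those edges lying on a directed path of length at most $k$ ending at $u$. Since simple paths use at most $|\mathcal{V}|-1$ edges, the loop exits within $\mathcal{O}(|\mathcal{V}|)$ rounds. The delicate step is to bound the \emph{cumulative} work, not the per-round work times the number of rounds. For this, I use the amortization that the increments $\nu_v^{(k)} = x_v^{(k)} \setminus x_v^{(k-1)}$ are pairwise disjoint in $k$ (for each fixed $v$), so every in-neighbor $v \in \inneigh{u}$ transmits any given edge to $u$ at most once over the entire execution. Consequently, the total volume of data received by $u$ satisfies $\sum_{v \in \inneigh{u}} \sum_k |\nu_v^{(k)}| \leq |\inneigh{u}|\cdot|\mathcal{E}|$, and with hashed sets this also dominates the cost of computing $\nu_u^{(k)}$, dispatching messages, and updating $x_u^{(k+1)}$, giving $\mathcal{O}(|\inneigh{u}||\mathcal{E}|)$ for this part.

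For part (ii), Tarjan's depth-first SCC algorithm on the subdigraph induced by $x_u^{(k)}$ runs in $\mathcal{O}(|V'|+|E'|)$, bounded by $\mathcal{O}(|\mathcal{V}|+|\mathcal{E}|)$. The subsequent classification only requires a single linear scan of $x_u^{(k)}$ to decide emptiness of the sets $\mathcal{A}$ and $\mathcal{B}$, which is $\mathcal{O}(|\mathcal{E}|)$. Combining the bounds from (i) and (ii) and taking the dominant term yields the claimed $\mathcal{O}(\max\{|\inneigh{u}||\mathcal{E}|,\ |\mathcal{V}|+|\mathcal{E}|\})$.

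The main obstacle is exactly the amortization in part (i): a naive analysis would multiply a per-round receive cost of $\mathcal{O}(|\inneigh{u}||\mathcal{E}|)$ by the $\mathcal{O}(|\mathcal{V}|)$ rounds, producing the much weaker $\mathcal{O}(|\inneigh{u}||\mathcal{V}||\mathcal{E}|)$. The disjointness of the successive $\nu_v^{(k)}$ is what prevents the round count from appearing as a factor in the final bound, and it is the only non-routine ingredient in the argument.
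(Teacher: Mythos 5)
Your proposal is correct and follows essentially the same decomposition as the paper's proof: the propagation loop costs $\mathcal O(|\inneigh{u}||\mathcal E|)$, Tarjan costs $\mathcal O(|\mathcal V|+|\mathcal E|)$, the classification costs $\mathcal O(|\mathcal E|)$, and the maximum dominates. The only difference is that you make explicit the amortization (disjointness of the increments $\nu_v^{(k)}$ across rounds) that the paper compresses into the single sentence ``in the worst case, agent $u$ receives all the edges from all of its in-neighbors''; this is a welcome clarification rather than a different argument.
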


\begin{proof}
    First, we calculate the time-complexity for agent $u$ in~Phase~\ref{phase:sccs}, \cref{a1:l1} --~\cref{a1:l9}. In the worst case, agent $u$ receives all the edges from all of its in-neighbors. Therefore, the time-complexity is $\mathcal O(|\inneigh{u}||\mathcal E|)$. 
    Subsequently, agent $u$, in~\cref{line:tarjan}, computes the SCCs using the Tarjan's algorithm~\cite{tarjan1972depth}, with time-complexity of $\mathcal O(|\mathcal V|+|\mathcal E|)$. 
    Finally, in~\cref{a1:11} --~\cref{a1:19}, the agent classifies the SCC it belongs to, in $\mathcal O(|\mathcal E|)$. 
    Hence, the total \mbox{time-complexity} is the maximum of the above quantities, i.e., $\mathcal O(\max\{|\inneigh{u}||\mathcal E|, |\mathcal V|+|\mathcal E|\})$.
\end{proof}

\begin{prop}\label{prop:alg2_complexity}
The time-complexity of Phase~\ref{phase:targetspropose}  for agent $u$ in $\mathcal G=(\mathcal V,\mathcal E)$ with $\alpha$ s-SCCs and $\beta$ t-SCCs is $\mathcal O(\alpha).$\hfill$\triangle$
\end{prop}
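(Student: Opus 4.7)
The plan is to walk through Phase~\ref{phase:targetspropose} line by line and bound the work performed by agent $u$, recalling that this phase is only entered when $u$ is the representative of a t-SCC (so in particular it is executed at all). The initialization $\representative{S}_u \gets \emptyset$ is clearly $\mathcal O(1)$. The for loop iterates over the s-SCCs contained in $\mathcal S_u$, of which there are at most $\alpha$ by definition. Inside each iteration one computes the argmin (representative) of a given s-SCC; since this representative can already be cached during the Tarjan pass in Phase~\ref{phase:sccs} -- tracking the minimum ID within each SCC is a running statistic that can be maintained at no extra asymptotic cost as Tarjan's algorithm closes each SCC -- the lookup may be treated as $\mathcal O(1)$. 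Hence the loop contributes $\mathcal O(\alpha)$ in total.

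Next, the long-range send step dispatches one message to every $s \in \representative{S}_u$, and since $|\representative{S}_u| \le \alpha$ there are at most $\alpha$ such sends. The concluding go-to is $\mathcal O(1)$. Adding the contributions yields $\mathcal O(\alpha)$, matching the claim. The main subtlety in the analysis is justifying the $\mathcal O(1)$ per argmin: absent the pre-caching observation, a direct scan over each s-SCC inside the loop would give $\mathcal O(\sum_S |S|)$ work, which can be as large as $\mathcal O(|\mathcal V|)$ and would not match the stated bound. This is the step I would be most careful about and would make explicit as a remark, pointing back to the complexity of Phase~\ref{phase:sccs} already established in Proposition~\ref{prop:alg1_complexity} so that the bookkeeping is consistent.
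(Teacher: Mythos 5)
Your proof is correct and follows essentially the same route as the paper's, which simply observes that in the worst case agent $u$ proposes to all $\alpha$ s-SCCs and concludes $\mathcal O(\alpha)$. Your additional remark about caching each SCC's minimum-ID representative during the Tarjan pass of Phase~1 addresses a per-iteration cost that the paper's one-line proof silently elides, and is a worthwhile clarification rather than a different argument.
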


\begin{proof}
    It is easy to check that, in the worst case, agent $u$ proposes to all $\alpha$ s-SCCs, yielding $\mathcal O(\alpha)$. 
\end{proof}

\begin{prop}\label{prop:alg3_complexity}
The time-complexity of Phase~\ref{phase:sourcespropose}  for agent $u$ in $\mathcal G=(\mathcal V,\mathcal E)$ with $\alpha$ s-SCCs and $\beta$ t-SCCs is $\mathcal O(\beta).$\hfill$\triangle$
\end{prop}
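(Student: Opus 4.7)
The plan is to mirror the argument already used for Proposition~\ref{prop:alg2_complexity}. Observe that every operation in Phase~\ref{phase:sourcespropose} is indexed by the set $P_u$ of targets that proposed to $u$ during Phase~\ref{phase:targetspropose}, so the whole analysis reduces to bounding $|P_u|$ and verifying that each step costs $\mathcal O(|P_u|)$ or $\mathcal O(1)$.

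First I would argue that $|P_u|\le\beta$. Each t-SCC designates a single representative (the agent with the smallest ID in that SCC), and in Phase~\ref{phase:targetspropose} a t-SCC representative sends at most one long-range proposal to~$u$. Since there are $\beta$ t-SCCs in $\mathcal G$, the set $P_u=\{t:\sendlong{t}{u}{2}\}$ satisfies $|P_u|\le\beta$. Building this set and receiving the associated messages $\representative{S}_t$ from all $t\in P_u$ therefore costs $\mathcal O(\beta)$.

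Next I would bound the branching logic. Checking whether there exists some $t\in P_u$ with $|\representative{S}_t|>1$ amounts to scanning $P_u$ and reading one integer per element, hence $\mathcal O(|P_u|)=\mathcal O(\beta)$. If the condition holds, selecting some qualifying $t$ is $\mathcal O(|P_u|)$ and sending the single message in~\cref{step:phase3send} is $\mathcal O(1)$. In the else branch (there is exactly one s-SCC and $u$ is its representative), $u$ adds one edge $(t,u)$ for each $t\in P_u$; this is at most $|P_u|\le\beta$ edge insertions into $\mathcal E^\ast$, so this block is again $\mathcal O(\beta)$.

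The ``hard part'' here is only to make sure that no hidden step inflates the complexity; once one notices that the representative convention ensures at most one proposal per t-SCC, all the remaining steps are linear scans or constant-time operations. Summing the contributions yields the claimed $\mathcal O(\beta)$ bound.
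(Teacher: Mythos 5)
Your proposal is correct and follows essentially the same route as the paper: bound the number of proposals received by $u$ by $\beta$ (one per t-SCC representative) and observe that all remaining steps are linear scans over $P_u$ or constant-time operations. The paper's own proof is just a one-line version of this same argument, so no further comparison is needed.
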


\begin{proof}
    It is easy to check that, in the worst case, agent $u$ received proposals from all $\beta$ t-SCCs and selects one by linear search, yielding $\mathcal O(\beta)$. 
\end{proof}

\begin{prop}\label{prop:alg4_complexity}
The time-complexity of Phase~\ref{phase:augmentation}  for agent $u$ in $\mathcal G=(\mathcal V,\mathcal E)$ with $\alpha$ s-SCCs and $\beta$ t-SCCs is $\mathcal O(\alpha).$\hfill$\triangle$
\end{prop}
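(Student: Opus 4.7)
My plan is to mirror the reasoning used in the proofs of Propositions~\ref{prop:alg2_complexity} and~\ref{prop:alg3_complexity}: decompose Phase~\ref{phase:augmentation} into its constituent steps and bound each by $\mathcal{O}(\alpha)$ using the fact that the number of source SCCs is $\alpha$ and therefore the number of potential proposers to $u$ is at most $\alpha$.

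The first step is to establish that $|P_u|\le\alpha$. Since only representatives of s-SCCs send messages in Phase~\ref{phase:sourcespropose}, and there are exactly $\alpha$ such representatives, agent $u$ can collect at most $\alpha$ entries in $P_u$. This bounds both the cost of the initial message-collection step and the subsequent reception of the accompanying sets $P_s$ for $s\in P_u$ by $\mathcal{O}(\alpha)$.

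Next I would handle the control flow and augmentation. The linear scan that searches for some $s\in P_u$ with $|P_s|>1$ takes $\mathcal{O}(|P_u|)=\mathcal{O}(\alpha)$. In the first branch exactly one edge $(u,s)$ is added, costing $\mathcal{O}(1)$. In the else branch, which corresponds to the special case of a single t-SCC, up to $|P_u|\le\alpha$ edges are appended to $\mathcal{E}^\ast$, again $\mathcal{O}(\alpha)$. Summing the contributions of all steps gives the claimed bound.

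The only mild obstacle is confirming that $|P_u|\le\alpha$ holds in both conditional branches of the preceding Phase~\ref{phase:sourcespropose}; but this is immediate from the observation that every s-SCC representative sends at most one message to any given target per round, so at most $\alpha$ distinct sources can appear in $P_u$. No calculation beyond counting is required, so the proof should read as a one-liner analogous to those of Propositions~\ref{prop:alg2_complexity} and~\ref{prop:alg3_complexity}.
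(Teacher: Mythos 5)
Your proof is correct and follows essentially the same route as the paper's: the paper's entire argument is that in the worst case $u$ receives proposals from all $\alpha$ s-SCC representatives and selects one, giving $\mathcal O(\alpha)$. Your additional bookkeeping (the linear scan and the else-branch adding up to $|P_u|\le\alpha$ edges) just makes explicit what the paper leaves implicit, without changing the argument.
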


\begin{proof}
    Similarly, in the worst case, agent $u$ received proposals from all $\alpha$ s-SCCs and selects one, i.e., $\mathcal O(\alpha)$.  
\end{proof}

Next, we present the overall time-complexity of our solution.

\begin{theorem}\label{th:complexity}
    The time-complexity of  Algorithm~\ref{fig:schematics}  for agent $u$ in $\mathcal G=(\mathcal V,\mathcal E)$ with $\alpha$ s-SCCs and $\beta$ t-SCCs is $\mathcal O(|\mathcal V||\mathcal E|\min\{\alpha,\beta\}).$\hfill$\circ$
\end{theorem}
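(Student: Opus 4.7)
The plan is to combine the per-phase complexity bounds from \Cref{prop:alg1_complexity}--\Cref{prop:alg4_complexity} with a bound on the total number of (distributed) rounds that Algorithm~\ref{fig:schematics} executes, where a round is defined as in the proof of \Cref{th:sound}: a sequence of steps in which every active agent returns to the top of Phase~\ref{phase:sccs}.

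First, I would bound the work performed by a single agent $u$ within one round. In the worst case, agent $u$ executes Phase~\ref{phase:sccs}, then either Phase~\ref{phase:targetspropose} or Phase~\ref{phase:sourcespropose}, and possibly Phase~\ref{phase:augmentation}. Summing the four bounds gives a per-round cost of
\[
\mathcal{O}\bigl(\max\{|\inneigh{u}||\mathcal E|,\,|\mathcal V|+|\mathcal E|\}+\alpha+\beta\bigr).
\]
Since $|\inneigh{u}|\le|\mathcal V|$ and $\alpha,\beta\le|\mathcal V|$, this simplifies to $\mathcal{O}(|\mathcal V||\mathcal E|)$, dominated by the edge-propagation step of Phase~\ref{phase:sccs}.

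Second, I would bound the number of rounds by $\min\{\alpha,\beta\}$. By the soundness argument in \Cref{th:sound}, the set of edges augmented in each round is a tight set, so by \Cref{prop:tight_set} the quantity $\gamma$ strictly decreases by at least one per round. The key refinement is that, as long as both $\alpha>1$ and $\beta>1$, every tight edge $(t,s)$ falls into the first case of the proof of \Cref{prop:tight}: the s-SCC containing $s$ and the t-SCC containing $t$ merge into an m-SCC, so \emph{both} $\alpha$ and $\beta$ decrease by at least one. Therefore, after at most $\min\{\alpha_0,\beta_0\}-1$ rounds, either $\alpha=1$ or $\beta=1$, at which point the ``else'' branch of Phase~\ref{phase:sourcespropose} (\cref{a3:l7}) or Phase~\ref{phase:augmentation} (\cref{a4:l10}) fires and augments all outstanding edges in a single final round. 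This yields a round count of at most $\min\{\alpha,\beta\}$.

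Multiplying the per-agent per-round cost $\mathcal{O}(|\mathcal V||\mathcal E|)$ by the round bound $\mathcal{O}(\min\{\alpha,\beta\})$ yields the claimed $\mathcal{O}(|\mathcal V||\mathcal E|\min\{\alpha,\beta\})$. The main obstacle is the round-count step: one must argue that the governing quantity is $\min\{\alpha,\beta\}$ rather than the naive bound $\gamma=\max\{\alpha,\beta\}$ implied directly by \Cref{prop:tight_set}. This requires observing both that each round in the ``generic regime'' ($\alpha,\beta>1$) drops both indices simultaneously, and that the ``single remaining source/target'' terminal branches collapse all residual augmentations into one concluding round rather than iterating once per remaining SCC.
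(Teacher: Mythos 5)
Your proposal is correct and follows essentially the same route as the paper: bound the per-round, per-agent cost by $\mathcal O(|\mathcal V||\mathcal E|)$ via \Cref{prop:alg1_complexity}--\Cref{prop:alg4_complexity}, and bound the number of rounds by $\mathcal O(\min\{\alpha,\beta\})$ using the terminal branches at \cref{a3:l7} and \cref{a4:l10}. Your explicit justification that each generic round ($\alpha,\beta>1$) decreases \emph{both} $\alpha$ and $\beta$ (since each tight edge merges an s-SCC and a t-SCC into an m-SCC) makes precise a step the paper leaves implicit, and your conclusion correctly states the $\min\{\alpha,\beta\}$ factor, whereas the paper's final sentence mistakenly writes $\max\{\alpha,\beta\}$ in contradiction with the theorem statement.
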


\begin{proof}
    By~\Cref{th:sound}, we know that we need to add, at most, $\max\{\alpha,\beta\}$ edges to the digraph to get a solution. Notwithstanding, from~\cref{a3:l7} of Phase~\ref{phase:sourcespropose} and~\cref{a4:l10} of Phase~\ref{phase:augmentation}, when $\alpha$ decrease to $1$ or $\beta$ decreases to $1$, in a single round the remaining edges are added, yielding $\mathcal O(\min\{\alpha,\beta\})$ rounds. 
    So, each round of Algorithm~\ref{fig:schematics} costs, at most, $\mathcal O(\max\{|\inneigh{u}||\mathcal E|, |\mathcal V|+|\mathcal E|\}+\alpha+\beta+\alpha)=\mathcal O(|\mathcal V||\mathcal E|)$, by~\Cref{prop:alg1_complexity} --~\ref{prop:alg4_complexity} and the fact that $\alpha,\beta\leq|\mathcal V|$. Hence, the total time-complexity is $\mathcal O(|\mathcal V||\mathcal E|\max\{\alpha,\beta\}).$
\end{proof}

In fact, in the particular cases where there is a single s-SCC or a single t-SCC, it is easy to check that in~\cref{a3:l7} of Phase~\ref{phase:sourcespropose} or in~\cref{a4:l10} of Phase~\ref{phase:augmentation}, respectively, all the required edges are added within a single round, entailing the following.

\begin{corollary}\label{cor:complexity}
    The time-complexity of Algorithm~\ref{fig:schematics}, for agent $u$ in $\mathcal G=(\mathcal V,\mathcal E)$ with $\alpha=1$ or $\beta=1$ is $\mathcal O(|\mathcal V||\mathcal E|)$.~\hfill$\sharp$
\end{corollary}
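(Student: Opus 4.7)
The plan is to observe that when $\min\{\alpha,\beta\}=1$, the general bound of~\Cref{th:complexity} already evaluates to $\mathcal O(|\mathcal V||\mathcal E|)$, so the only thing to check is that the ``one-shot'' branches identified in the proof of~\Cref{th:complexity} are actually exercised, meaning the algorithm performs exactly one round.

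First, I would treat the case $\alpha=1$. Let $u$ be the representative of the unique s-SCC. After Phase~\ref{phase:sccs}, every t-SCC representative proposes to $u$ in Phase~\ref{phase:targetspropose}, so the set $P_u$ in Phase~\ref{phase:sourcespropose} contains all $\beta$ t-SCC representatives, and for each $t\in P_u$ one has $|\representative{S}_{t}|=1$ because there is only one s-SCC for $t$ to see. The conditional in Phase~\ref{phase:sourcespropose} therefore evaluates to false and the else branch at~\cref{a3:l7} fires, augmenting $\mathcal{E}^\ast$ with all $\beta$ edges $(t,u)$ in a single step. By~\Cref{lemma:s-t} the digraph is now strongly connected, so on the next execution of Phase~\ref{phase:sccs} each agent classifies its SCC as an i-SCC and exits.

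Next I would handle $\beta=1$ symmetrically. The unique t-SCC representative proposes to every s-SCC representative in Phase~\ref{phase:targetspropose}; each s-SCC representative $u$ sees a single $t$ in $P_u$ but with $|\representative{S}_{t}|=\alpha>1$, so it forwards its proposal in Phase~\ref{phase:sourcespropose} via~\cref{step:phase3send}; and in Phase~\ref{phase:augmentation} the unique t-SCC representative finds $|P_s|=1$ for every incoming $s$, so the else branch at~\cref{a4:l10} triggers and all $\alpha$ edges are added at once, again yielding a strongly connected digraph after one round.

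With the single-round property established, the total cost is simply that of one round, which the proof of~\Cref{th:complexity} already bounds by $\mathcal O(\max\{|\inneigh{u}||\mathcal E|,|\mathcal V|+|\mathcal E|\}+\alpha+\beta+\alpha)=\mathcal O(|\mathcal V||\mathcal E|)$ via Propositions~\ref{prop:alg1_complexity}--\ref{prop:alg4_complexity}. The only real obstacle is making the case analysis of the conditional tests in Phases~\ref{phase:sourcespropose} and~\ref{phase:augmentation} airtight so that the one-shot branches are guaranteed to fire; the rest is a direct specialization of the general bound.
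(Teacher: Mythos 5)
Your proof is correct and takes essentially the same route as the paper, which justifies the corollary by observing that when $\alpha=1$ or $\beta=1$ the else-branches at~\cref{a3:l7} of Phase~\ref{phase:sourcespropose} (resp.~\cref{a4:l10} of Phase~\ref{phase:augmentation}) add all remaining edges in a single round, so the total cost is that of one round, $\mathcal O(|\mathcal V||\mathcal E|)$; you merely spell out the case analysis the paper leaves as ``easy to check.'' One small nit: the fact that the digraph is strongly connected after that round follows from the soundness argument of~\Cref{th:sound} (the added edges form a tight set of size $\gamma(\mathcal G)$), not from~\Cref{lemma:s-t}, which only bounds the cardinality of an optimal solution.
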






 \section{Illustrative Examples}\label{sec:ill}
{\color{blue}
We present  pedagogical examples (\Cref{sub:sim}), and a study of the our algorithm on random graphs (\Cref{sub:disc}).

\subsection{Pedagogical Examples}\label{sub:sim}

In what follows, each box of yellow nodes represents the agents in an s-SCC, each blue box the agents in a t-SCC, and each green box the agents in an m-SCC. Also, colors and labels are as in Fig.~\ref{fig:exp:running}.

The first example is the previous running example, depicted in~\Cref{fig:exp:running}. 
The network for which we address~\MCDAP~is represented by the digraph with black edges in~\Cref{fig:exp:running}.  The solution obtained with Algorithm~\ref{fig:schematics} is $\mathcal E^\ast=\{(6,1),(2,4)\}$, depicted by the red edges in~\Cref{fig:exp:running}. 
Moreover, the round where each new edge is created is annotated in the red edges.

\begin{figure}[!ht]
\centering
{
\begin{tikzpicture}[scale=.38, transform shape,node distance=1.5cm]
\begin{scope}[every node/.style={circle,thick,draw},square/.style={regular polygon,regular polygon sides=4}]
\node (1) at (7.80718,0.) {\Large $1$};
\node (2) at (10.5048,0.) {\Large $2$};
\node (3) at (12.8161,0.) {\Large $3$};
\node (4) at (0.,0.) {\Large $4$};
\node (5) at (2.31048,0.) {\Large $5$};
\node (6) at (5.00628,0.) {\Large $6$};
\end{scope}
\begin{scope}
    \node [fit=(1), c3, draw=c2!90, very thick, inner sep=2pt] {};
    \node [fit=(4) (5), c3, draw=c2!90, very thick, inner sep=2pt] {};
    \node [fit=(6), c3, draw=c1!90, very thick, inner sep=2pt] {};
    \node [fit=(2) (3), c3, draw=c1!90, very thick, inner sep=2pt] {};
\end{scope}
\begin{scope}[>={Stealth[black]},
              every edge/.style={draw=black, thick}]
\path [->] (1) edge node {} (2);
\path [->] (2) edge[bend right=15] node {} (3);
\path [->] (3) edge[bend right=15] node {} (2);
\path [->] (4) edge[bend right=15] node {} (5);
\path [->] (5) edge[bend right=15] node {} (4);
\path [->] (5) edge node {} (6);
\path [->] (1) edge[bend right=15] node {} (6);
\end{scope}
\begin{scope}[>={Stealth[cadmiumred]}, every edge/.style={draw=cadmiumred, thick}]
\path [->] (6) edge[bend right=15] node[fill=gray!20, text=black, draw=black, thin, solid] {\Large 1} (1);
\path [->] (2) edge[bend right=22]  node[fill=gray!20, text=black, draw=black, thin, solid] {\Large 2} (4);
\end{scope}
\end{tikzpicture}
}
\caption{The digraph with black edges is the one to be augmented (\MCDAP), which decomposition in SCCs is represented by boxes of nodes. The red edges represent the augmentation computed using the scheme in~Algorithm~\ref{fig:schematics}. The edges' labels are the round in which the edge is created.}
\label{fig:exp:running}
\end{figure}
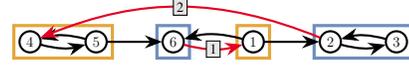

Following the same setup, the second example is depicted in~\Cref{fig:exp1}. 
Although one can always design a solution only connecting \mbox{t-SCCs} to \mbox{s-SCCs}, our proposal is flexible and can produce other solutions, as this one. 
 

\vspace{-3mm}
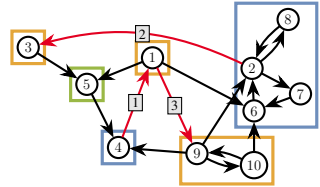
\begin{figure}[!ht]
  \begin{minipage}[c]{0.56\columnwidth} 
    \centering
    \begin{tikzpicture}[scale=.38, transform shape,node distance=1.5cm]
\begin{scope}[every node/.style={circle,thick,draw},square/.style={regular polygon,regular polygon sides=4}]
\node (1) at (4.34864,4.25775) {\Large $1$};
\node (2) at (7.8313,3.88845) {\Large $2$};
\node (3) at (0.,4.62387) {\Large $3$};
\node (4) at (3.1583,1.11785) {\Large $4$};
\node (5) at (2.05192,3.32892) {\Large $5$};
\node (6) at (7.9,2.4) {\Large $6$};
\node (7) at (9.52516,2.99385) {\Large $7$};
\node (8) at (9.09311,6.-.4) {\Large $8$};
\node (9) at (5.90349,1.00426-.1) {\Large $9$};
\node (10) at (7.9,0.6-.1) {\Large $10$};
\end{scope}
\begin{scope}
    \node [fit=(1), c3, draw=c2!90, very thick, inner sep=2pt] {};
    \node [fit=(3), c3, draw=c2!90, very thick, inner sep=2pt] {};
    \node [fit=(9) (10), c3, draw=c2!90, very thick, inner sep=2pt] {};
    \node [fit=(4), c3, draw=c1!90, very thick, inner sep=2pt] {};
    \node [fit=(8) (2) (6) (7), c3, draw=c1!90, very thick, inner sep=2pt] {};
    \node [fit=(5), c3, draw=c3!90, very thick, inner sep=2pt] {};
\end{scope}
\begin{scope}[>={Stealth[black]},
              every edge/.style={draw=black, thick}]
\path [->] (1) edge node {} (5);
\path [->] (1) edge node {} (6);
\path [->] (2) edge node {} (7);
\path [->] (2) edge[bend right=15] node {} (8);
\path [->] (3) edge node {} (5);
\path [->] (5) edge node {} (4);
\path [->] (6) edge node {} (2);
\path [->] (7) edge node {} (6);
\path [->] (8) edge[bend right=15] node {} (2);
\path [->] (9) edge node {} (2);
\path [->] (9) edge node {} (4);
\path [->] (9) edge[bend right=15] node {} (10);
\path [->] (10) edge node {} (6);
\path [->] (10) edge[bend right=15] node {} (9);
\end{scope}
\begin{scope}[>={Stealth[cadmiumred]}, every edge/.style={draw=cadmiumred, thick}]
\path [->] (1) edge node[fill=gray!20, text=black, draw=black, thin, solid] {\Large 3} (9);
\path [->] (2) edge[bend right=20] node[fill=gray!20, text=black, draw=black, thin, solid] {\Large 2} (3);
\path [->] (4) edge node[fill=gray!20, text=black, draw=black, thin, solid] {\Large 1} (1);
\end{scope}
\end{tikzpicture}
  \end{minipage}%
  \hspace{0.01\columnwidth} 
  \begin{minipage}[c]{0.41\columnwidth}
    \centering
    \caption{
       The digraph with black edges in~(a) is the digraph to be augmented (\MCDAP), which decomposition in SCCs is represented by boxes of nodes. $\mathcal E^\ast=\{(4,1)(2,3),(1,9)\}$.
    } \label{fig:exp1}
  \end{minipage}
\end{figure}
\vspace{-3mm}



In the Fig.~\ref{fig:exp2} example, 2 edges were created in the first round. 

\begin{figure}[!ht]
    \centering
    \begin{tikzpicture}[scale=.38, transform shape,node distance=1.5cm]
\begin{scope}[every node/.style={circle,thick,draw},square/.style={regular polygon,regular polygon sides=4}]
\node (1) at (14.9233,3.7034) {\Large $1$};
\node (2) at (17.323,3.83078) {\Large $2$};
\node (3) at (7.47641,7.06299-1.2) {\Large $3$};
\node (4) at (7.42275,4.12663-.2) {\Large $4$};
\node (5) at (0.0488491-1.,5.25912+.2) {\Large $5$};
\node (6) at (0.995835+.5-.9,6.28858-.3) {\Large $6$};
\node (7) at (0.-.9,3.72284) {\Large $7$};
\node (8) at (2.78184-.9,4.06325) {\Large $8$};
\node (9) at (11.7964,5.46256-.6) {\Large $9$};
\node (10) at (9.89909,3.70164) {\Large $10$};
\node (11) at (12.4444,3.32596) {\Large $11$};
\node (12) at (3.95642-.3,0.347246+2.2) {\Large $12$};
\node (14) at (5.20421+.3,0.+2.3) {\Large $14$};
\node (13) at (4.92066,2.83159+1.5) {\Large $13$};
\end{scope}
\begin{scope}
    \node [fit=(3) (4), c3, draw=c2!90, very thick, inner sep=2pt] {};
    \node [fit=(1) (2), c3, draw=c2!90, very thick, inner sep=2pt] {};
    \node [fit=(5) (6) (8) (7), c3, draw=c2!90, very thick, inner sep=2pt] {};
    \node [fit=(12) (13) (14), c3, draw=c1!90, very thick, inner sep=2pt] {};
    \node [fit=(9) (10) (11), c3, draw=c1!90, very thick, inner sep=2pt] {};
\end{scope}
\begin{scope}[>={Stealth[black]},
              every edge/.style={draw=black, thick}]
\path [->] (1) edge[bend right=15] node {} (2);
\path [->] (2) edge[bend right=15] node {} (1);
\path [->] (3) edge[bend right=15] node {} (4);
\path [->] (4) edge[bend right=15] node {} (3);
\path [->] (5) edge[bend right=15] node {} (6);
\path [->] (5) edge node {} (7);
\path [->] (5) edge node {} (8);
\path [->] (6) edge[bend right=15] node {} (5);
\path [->] (6) edge[bend right=15] node {} (8);
\path [->] (7) edge node {} (6);
\path [->] (8) edge[bend right=15] node {} (6);
\path [->] (8) edge node {} (7);
\path [->] (9) edge node {} (10);
\path [->] (10) edge node {} (11);
\path [->] (11) edge node {} (9);
\path [->] (12) edge node {} (14);
\path [->] (13) edge node {} (12);
\path [->] (14) edge node {} (13);
\path [->] (1) edge node {} (11);
\path [->] (4) edge node {} (10);
\path [->] (4) edge node {} (13);
\path [->] (8) edge node {} (13);
\end{scope}
\begin{scope}[>={Stealth[cadmiumred]}, every edge/.style={draw=cadmiumred, thick}]
\path [->] (12) edge[bend left=25] node[fill=gray!20, text=black, draw=black, thin, solid] {\Large 1} (5);
\path [->] (9) edge[bend right=25] node[fill=gray!20, text=black, draw=black, thin, solid] {\Large 1} (3);
\path [->] (5) edge[bend right=28] node[fill=gray!20, text=black, draw=black, thin, solid] {\Large 2} (1);
\end{scope}
\end{tikzpicture}
    \caption{The digraph with black edges is the one to be augmented (\MCDAP), which decomposition in SCCs is represented by boxes of nodes. $\mathcal E^\ast=\{(9,1)(12,5),(5,1)\}$.}
    \label{fig:exp2}
\end{figure}
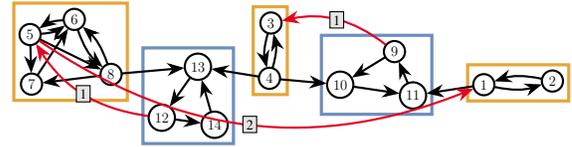

\subsection{Random Graphs}\label{sub:disc}



We analyze the average number of rounds required by Algorithm~\ref{fig:schematics} on randomly generated digraphs. For each set of parameters, we generate 30 random weakly connected digraphs using four methods: (i) Erdős-Rényi, Bernoulli, Erdős-Rényi uniform, and random DAG generation~\cite{drobyshevskiy2019random}. In the Erdős-Rényi and Bernoulli models, with $n$ nodes and edge probability $p$, edges are added or removed from an empty or complete graph, respectively. In the Erdős-Rényi uniform and random DAG models, $n$ is the number of nodes and $m$ is the number of edges.

Results show that, except for random DAGs, the number of rounds (i.e., $\min\{\alpha, \beta\}$) remains approximately the same as parameters increase. In contrast, random DAGs, with no cycles, lead to more SCCs,  increasing the number of rounds.

\begin{figure}[!ht]
    \centering
    \subfigure[{\tiny Random \"Erdos-Rényi ($xx$: $n$, $yy$: $p$).}]{
    \includegraphics[width=.465\columnwidth]{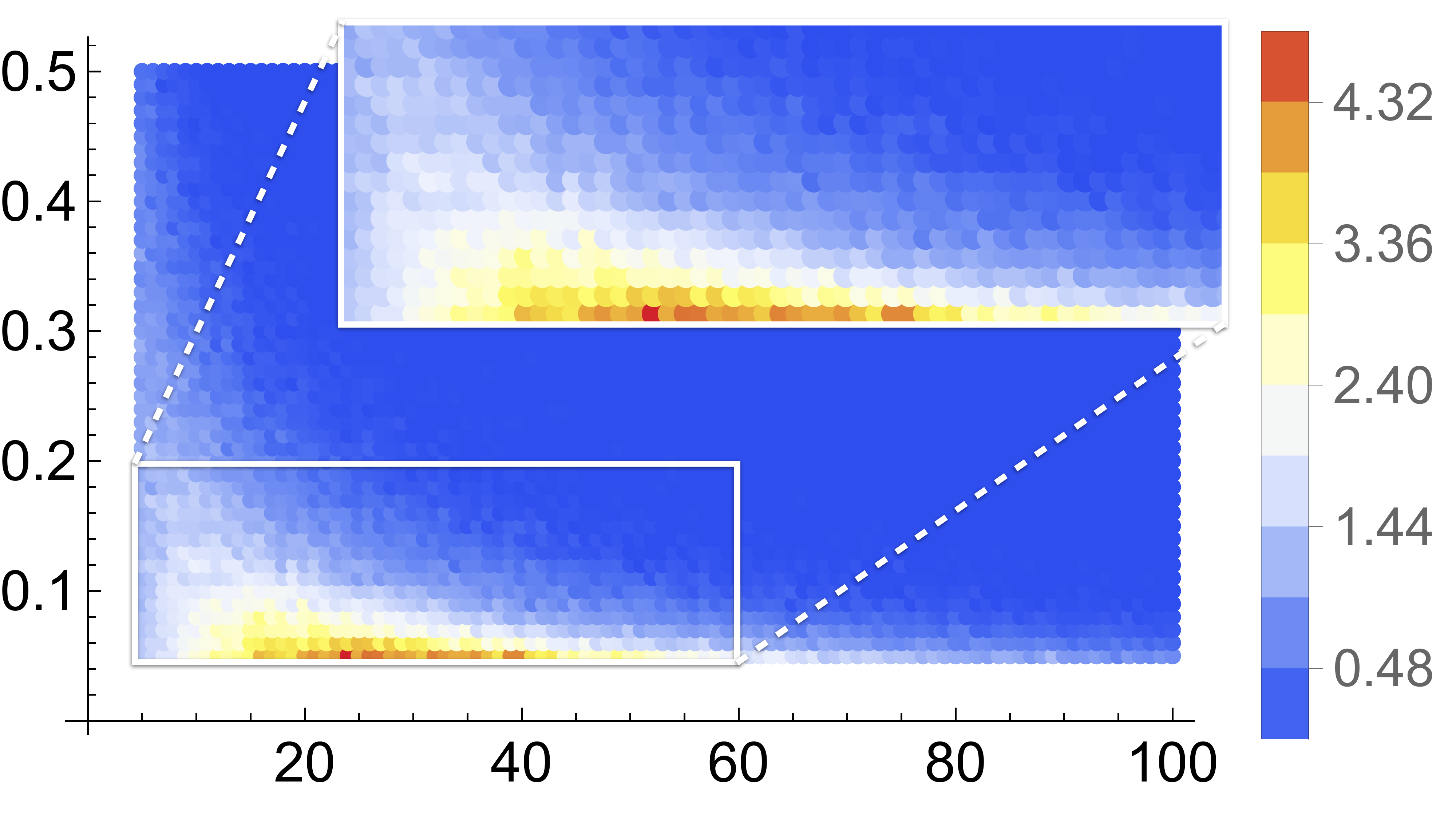}
    }    
    \subfigure[\tiny Random Bernoulli ($xx$: $n$, $yy$: $p$).]{
    \includegraphics[width=.465\columnwidth]{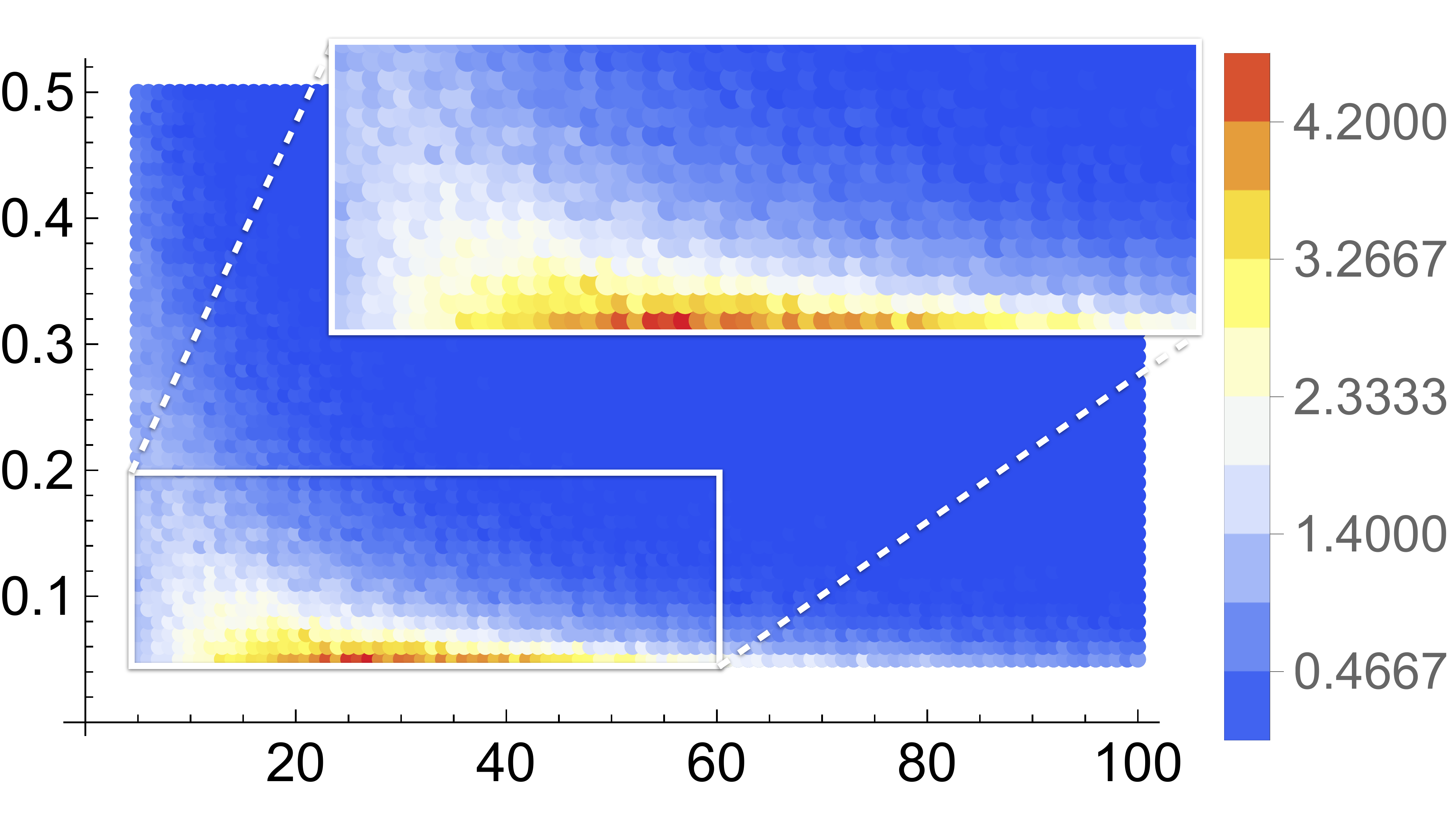}
    }
    \subfigure[\tiny Random \"Erdos-Rényi uniform ($xx$: $n$, $yy$: $m$).]{
    \includegraphics[width=.465\columnwidth]{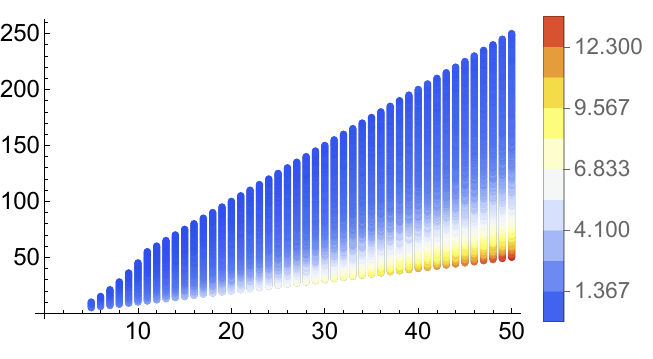}
    }
    \subfigure[\tiny Random DAGs ($xx$: $n$, $yy$: $m$).]{
    \includegraphics[width=.465\columnwidth]{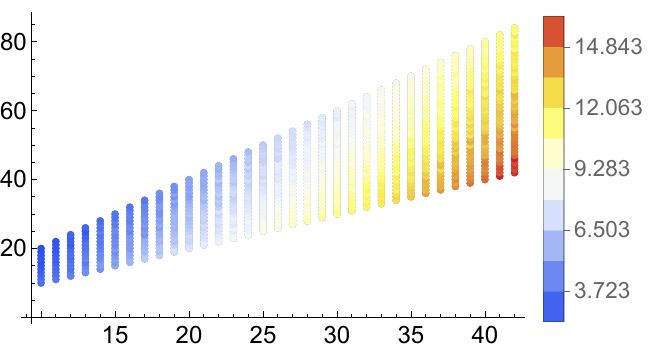}
    }
    \caption{Average number of rounds of Algorithm~\ref{fig:schematics} on random digraphs, using different random generation models.}
    \label{fig:enter-label}
\end{figure}





}

\section{Conclusions}\label{sec:conclusion}

This paper introduces the first distributed method for solving a communication network augmentation problem,  where agents efficiently identify a minimum number of edges required to render a strongly connected communication network. 
Our empirical analysis of random communication networks further validates the algorithm's efficiency, demonstrating its robust performance across various network structures and sizes. 

Future work presents several exciting avenues for exploration. Key research directions include addressing challenges in dynamic networks where topology evolves over time, developing cost-aware augmentation strategies that consider edge creation costs, and enhancing network resilience against node or edge failures. 


{\footnotesize 
\bibliographystyle{IEEEtran}
\bibliography{IEEEabrv,mybibfile.bib}
}

\end{document}